\newtheorem{thm}{Theorem}[section]
\newtheorem*{thm*}{Theorem}
\newtheorem{prop}[thm]{Proposition}
\newtheorem{cor}[thm]{Corollary}
\newtheorem*{cor*}{Corollary}
\theoremstyle{definition}
\newtheorem{defn}[thm]{Definition}
\newtheorem{example}[thm]{Example}
\theoremstyle{remark}
\newtheorem{remark}[thm]{Remark}
\newcommand{\mh}{\mbox{MHM}}
\newcommand {\Fa}    {\ensuremath{\mbox{$\mathcal{F}$}}}
\newcommand {\Ha}    {\ensuremath{\mbox{$\mathcal{H}$}}}
\newcommand {\mbc}   {\ensuremath{\mathbb{C}}}
\newcommand {\real}  {\ensuremath{\mathbb{R}}}
\newcommand {\intg}  {\ensuremath{\mathbb{Z}}}
\newcommand {\cplx}  {\ensuremath{\mathbb{C}}}
\newcommand {\rat}   {\ensuremath{\mathbb{Q}}}
\newcommand {\Ext}   {\ensuremath{\operatorname{Ext}}}
\newcommand {\Hom}   {\ensuremath{\operatorname{Hom}}}
\newcommand {\im}    {\operatorname{im}}
\newcommand {\rk}    {\operatorname{rk}}
\newcommand {\interi}{\operatorname{int}}
\newcommand {\emb}   {\ensuremath{\operatorname{emb}}}
\newcommand {\ip}    {\ensuremath{I^{\bar{p}}}}
\newcommand {\iq}    {\ensuremath{I^{\bar{q}}}}
\newcommand {\imi}   {\ensuremath{I^{\bar{m}}}}
\newcommand {\redh}  {\ensuremath{\widetilde{H}}}
\newcommand {\cone}  {\ensuremath{\operatorname{cone}}}
\newcommand {\id}    {\ensuremath{\operatorname{id}}}
\newcommand {\CWkcb}   {\operatorname{\mathbf{CW}}_{k\supset \partial}}
\newcommand {\HoCW}    {\operatorname{\mathbf{HoCW}}}
\begin{document}


\title[Singularities and Intersection Space Homology]{Deformation of Singularities and the Homology of Intersection Spaces}

\author{Markus Banagl}

\address{Mathematisches Institut, Universit\"at Heidelberg,
  Im Neuenheimer Feld 288, 69120 Heidelberg, Germany}

\email{banagl@mathi.uni-heidelberg.de}

\author{Laurentiu Maxim}

\address{Department of Mathematics,
  University of Wisconsin, 480 Lincoln Drive, Madison, WI 53706, USA}

\email{maxim@math.wisc.edu}

\thanks{The first author was in part supported by a research grant of the
 Deutsche Forschungsgemeinschaft. The second author was partially supported by NSF-1005338.}

\date{\today}


\keywords{Singularities, projective hypersurfaces, smooth deformations, Poincar\'e duality,
intersection homology, Milnor fibration, mixed Hodge structures, mirror symmetry.}


\begin{abstract}
While intersection cohomology is stable under small resolutions, both ordinary
and intersection cohomology are unstable under smooth deformation of singularities.
For complex projective algebraic hypersurfaces with an isolated singularity, we show that the first
author's cohomology of intersection spaces is stable under smooth deformations in all degrees
except possibly the middle, and in the middle degree precisely when the monodromy
action on the cohomology of the Milnor fiber is trivial. In many situations, the
isomorphism is shown to be a ring homomorphism induced by a continuous map.
This is used to show that the rational cohomology of intersection spaces can be
endowed with a mixed Hodge structure compatible with Deligne's mixed Hodge
structure on the ordinary cohomology of the singular hypersurface.
 \end{abstract}

\maketitle


\tableofcontents


\section{Introduction}

Given a singular complex algebraic variety $V$, there are essentially two
systematic geometric processes for removing the singularities: one may resolve
them, or one may pass to a smooth deformation of $V$. Ordinary homology is
highly unstable under both processes. This is evident from duality considerations:
the homology of a smooth variety satisfies Poincar\'e duality, whereas the
presence of singularities generally prevents Poincar\'e duality. 
Goresky and MacPherson's middle-perversity intersection cohomology
$IH^\ast (V;\rat)$, as well as Cheeger's $L^2$-cohomology $H^\ast_{(2)} (V)$
do satisfy Poincar\'e duality for singular $V$; thus it makes sense to ask
whether these theories are stable under the above two processes.
The answer is that both are preserved under so-called small resolutions.
Not every variety possesses a small resolution, though it does possess some
resolution. Both $IH^\ast$ and $H^\ast_{(2)}$ are unstable under smooth
deformations. For projective hypersurfaces with isolated singularities,
the present paper answers positively the question: Is there a cohomology
theory for singular varieties, which is stable under smooth deformations?
Note that the smallness condition on resolutions needed for the stability
of intersection cohomology suggests that the class of singularities for which
such a deformation stable cohomology theory exists must also be restricted
by some condition. \\

Let $\bar{p}$ be a perversity in the sense of intersection homology theory.
In \cite{banagl-intersectionspaces}, the first author introduced a
homotopy-theoretic method that assigns to certain types of real
$n$-dimensional stratified topological pseudomanifolds $X$ CW-complexes
\[ \ip X, \]
the \emph{perversity-$\bar{p}$ intersection spaces} of $X$, such that
for complementary perversities $\bar{p}$ and $\bar{q}$, there is a 
Poincar\'e duality isomorphism
\[ \redh^i (\ip X;\rat) \cong \redh_{n-i} (\iq X;\rat) \]
when $X$ is compact and oriented. This method is in particular
applicable to complex algebraic varieties $V$ with isolated singularities,
whose links are simply connected. The latter is a sufficient, but not a 
necessary condition. If $V$ is an algebraic variety, then $\ip V$ will in general
not be algebraic anymore. If $\bar{p} = \bar{m}$ is the lower middle 
perversity, we will briefly write $IX$ for $\imi X$. The groups
\[ HI^\ast_{\bar{p}} (X;\rat) = H^\ast (\ip X;\rat) \]
define a new cohomology theory for stratified spaces, usually not
isomorphic to intersection cohomology $IH^\ast_{\bar{p}}(X;\rat)$.
This is already apparent from the observation that
$HI^\ast_{\bar{p}}(X;\rat)$ is an algebra under cup product, whereas
it is well-known that $IH^\ast_{\bar{p}} (X;\rat)$ cannot generally,
for every $\bar{p}$, be endowed with a $\bar{p}$-internal algebra
structure. Let us put $HI^\ast (X;\rat) = H^\ast (IX;\rat).$ \\

It was pointed out in \cite{banagl-intersectionspaces} that in the context
of conifold transitions, the ranks of $HI^\ast (V;\rat)$ for a singular
conifold $V$ agree with the ranks of $H^\ast (V_s;\rat)$, where $V_s$
is a nearby smooth deformation of $V$; see the table on page 199 and
Proposition 3.6 in \emph{loc. cit.} The main result, Theorem \ref{thm1},
of the present paper is the following Stability Theorem. \\

\noindent \textbf{Theorem.} {\it
Let $V$ be a complex projective hypersurface of complex dimension $n\not= 2$
with one isolated singularity and let $V_s$ be a nearby smooth deformation
of $V$. Then, for all $i<2n$ and $i\not= n,$ we have
\[ \redh^i (V_s;\rat) \cong \redh I^i (V;\rat). \]
Moreover,
\[ H^n (V_s;\rat) \cong HI^n (V;\rat) \]
if, and only if, the monodromy operator acting on the cohomology of the
Milnor fiber of the singularity is trivial.} \\

\noindent The case of a surface $n=2$ is excluded because a general construction
of the intersection space in this case is presently not available.
However, the theory $HI^\ast (V;\real)$ has a de Rham description \cite{banagl-derhamintspace}
by a certain complex of global differential forms on the top stratum of $V$,
which does not require that links be simply connected. Using this description
of $HI^\ast$, the theorem can be extended to the surface case. The
description by differential forms is beyond the scope of this paper and will
not be further discussed here. \\

Let us illustrate the Stability Theorem with a simple example.
Consider the equation
\[ y^2 = x(x-1)(x-s) \]
(or its homogeneous version $v^2 w = u(u-w)(u-s w),$ defining a curve in $\cplx P^2$),
where the complex parameter $s$ is constrained to lie inside the
unit disc, $| s |<1$. For $s \not= 0,$ the equation defines an elliptic curve
$V_s$, homeomorphic to a $2$-torus $T^2$. For $s =0$, a local isomorphism
\[ V = \{ y^2 = x^2 (x-1) \} \longrightarrow \{ \eta^2 = \xi^2 \} \]
near the origin is given by $\xi = x g(x),$ $\eta = y,$ with $g(x) = \sqrt{x-1}$ analytic and
nonzero near $0$. The equation $\eta^2 = \xi^2$ describes a nodal singularity at the origin in $\mbc^2$, whose link
is $\partial I \times S^1$, two circles. Thus $V$ is homeomorphic to a pinched $T^2$ with a meridian
collapsed to a point, or, equivalently, a cylinder $I\times S^1$ with coned-off boundary. The
ordinary homology group $H_1 (V;\intg)$ has rank one, generated by the longitudinal circle.
The intersection homology group $IH_1 (V;\intg)$ agrees with the intersection homology of the
normalization $S^2$ of $V$:
\[ IH_1 (V;\intg) = IH_1 (S^2;\intg) = H_1 (S^2;\intg)=0. \]
Thus, as $H_1 (V_s;\intg)=H_1 (T^2;\intg) = \intg \oplus \intg$,
neither ordinary homology nor intersection homology remains invariant under the
smoothing deformation $V \leadsto V_s$. The middle perversity intersection space $IV$
of $V$ is a cylinder $I \times S^1$ together with an interval, whose one endpoint is
attached to a point in $\{ 0 \} \times S^1$ and whose other endpoint is attached to a
point in $\{ 1 \} \times S^1$. Thus $IV$ is homotopy equivalent to the figure eight and
\[ H_1 (IV;\intg) = \intg \oplus \intg, \]
which does agree with $H_1 (V_s;\intg)$. Several other examples are worked out
throughout the paper, including a reducible curve, a Kummer surface and quintic
threefolds with nodal singularities. \\

We can be more precise about the isomorphisms of the Stability Theorem.
Given $V$, there is a canonical map $IV\to V$, and given a nearby smooth
deformation $V_s$ of $V$, one has the specialization map
$V_s \to V$. In Proposition \ref{map}, we construct a map $IV\to V_s$
such that $IV\to V_s \to V$ is a factorization of $IV\to V$.
The map $IV \to V_s$ induces the isomorphisms of the Stability Theorem.
It follows in particular that one has an algebra isomorphism
$\redh I^\ast (V;\rat)\cong \redh^\ast (V_s;\rat)$ (in degrees less than
$2n$, and for trivial monodromy). We use this geometrically induced
isomorphism to show that under the hypotheses of the proposition,
$HI^\ast (V;\rat)$ can be equipped with a mixed Hodge structure, so that
$IV\to V$ induces a homomorphism of mixed Hodge structures on
cohomology (Corollary \ref{cor.hodgestruct}). \\

The relationship between $IH^\ast$ and $HI^\ast$ is very well illuminated
by mirror symmetry, which tends to exchange resolutions and deformations.
It is for instance conjectured in \cite{morrison} that the mirror of a conifold
transition, which consists of a deformation $s \to 0$ (degeneration smooth to
singular) followed by a small resolution, is again a conifold transition, but
performed in the reverse direction. This observation strongly suggests that
since there is a theory $IH^\ast$ stable under small resolutions, there ought to
be a mirror theory $HI^\ast$ stable under certain ``small" deformations. This is
confirmed by the present paper and by the results of Section 3.8 in
\cite{banagl-intersectionspaces}, where it is shown that if $V^\circ$ is the mirror
of a conifold $V$, both sitting in  mirror symmetric conifold transitions, then
\[ \begin{array}{lcl} 
\rk IH_3 (V) & = & \rk HI_2 (V^\circ) + \rk HI_4 (V^\circ)+2, \\
\rk IH_3 (V^\circ) & = & \rk HI_2 (V) + \rk HI_4 (V)+2, \\
\rk HI_3 (V) & = & \rk IH_2 (V^\circ) + \rk IH_4 (V^\circ)+2,
 \text{ and} \\ 
\rk HI_3 (V^\circ) & = & \rk IH_2 (V) + \rk IH_4 (V)+2.
\end{array} \]
In the same spirit, the well-known fact that the intersection homology of a complex variety $V$ is a vector subspace of the ordinary homology of any resolution of $V$ is ``mirrored" by our result proved in Theorem \ref{isomap} below, stating that the intersection space homology $HI_* (V)$ is a subspace of the homology $H_* (V_s)$ of any smoothing $V_s$ of $V$.

Since mirror symmetry is a phenomenon that arose originally in string theory,
it is not surprising that the theories $IH^\ast,$ $HI^\ast$ have a specific
relevance for type IIA, IIB string theories, respectively.
While $IH^\ast$ yields the correct count of massless $2$-branes on a conifold
in type IIA theory, the theory $HI^\ast$ yields the correct count of massless
$3$-branes on a conifold in type IIB theory. These are Propositions 3.6, 3.8 and
Theorem 3.9 in \cite{banagl-intersectionspaces}. \\

The Euler characteristics $\chi$ of $IH^\ast$ and $HI^\ast$ are compared in 
Corollary 4.6; the result is seen to be consistent with the formula
\[ \chi(H_*(V))-\chi(IH_*(V))=\sum_{x \in {\rm Sing}(V)} 
   \left(1-\chi(IH_*(\overset{\circ}{\cone} L_x)) \right), \]
where $\overset{\circ}{\cone} L_x$ is the open cone on the link $L_x$ of the singularity $x$, obtained in \cite{CMS08}.
The behavior of classical intersection homology under deformation of singularities is
discussed from a sheaf-theoretic viewpoint in Section \ref{deform}.
Proposition \ref{thm2} observes that the perverse self-dual sheaf
$\psi_{\pi}(\rat_X)[n],$ where $\psi_\pi$ is the nearby cycle functor of a 
smooth deforming family $\pi: X\to S$ with singular fiber $V=\pi^{-1}(0)$,
is isomorphic in the derived category of $V$ to the intersection chain sheaf
$IC_V$ if, and only if, $V$ is nonsingular. The hypercohomology of
$\psi_{\pi}(\rat_X)[n]$ computes the cohomology of the general fiber $V_s$
and the hypercohomology of $IC_V$ computes $IH^\ast (V)$. \\

Finally, the phenomena described in this paper seem to have a wider scope
than hypersurfaces. The conifolds and Calabi-Yau threefolds investigated in
\cite{banagl-intersectionspaces} were not assumed to be hypersurfaces,
nevertheless $HI^\ast$ was seen to be stable under the deformations
arising in conifold transitions. \\

\textbf{Notation.} Rational homology will be denoted by $H_\ast (X), IH_\ast (X), HI_\ast (X)$, whereas
integral homology will be written as $H_\ast (X;\intg), IH_\ast (X;\intg), HI_\ast (X;\intg)$.
The linear dual of a rational vector space $W$ will be written as
$W^\ast = \Hom (W,\rat)$. For a topological space $X$, $\redh_\ast (X)$ and
$\redh^\ast (X)$ denote reduced (rational) homology and cohomology, respectively.


\section{Background on Intersection Spaces}\label{background}

In \cite{banagl-intersectionspaces}, the first author introduced a method that
associates to certain classes of stratified
pseudomanifolds $X$ CW-complexes
\[ \ip X, \]
the \emph{intersection spaces of $X$}, where $\bar{p}$ is a perversity in the
sense of Goresky and MacPherson's intersection homology,
such that the \emph{ordinary} (reduced, rational)
homology $\redh_\ast (\ip X)$ satisfies generalized Poincar\'e duality
when $X$ is closed and oriented.
The resulting homology
theory $X \leadsto HI^{\bar{p}}_\ast (X) = H_\ast (\ip X)$ is neither isomorphic to intersection
homology, which we will write as $IH^{\bar{p}}_\ast (X)$, nor (for real coefficients) linearly dual to $L^2$-cohomology
for Cheeger's conical metrics. The 
Goresky-MacPherson intersection chain complexes $IC^{\bar{p}}_\ast (X)$
are generally not algebras, unless $\bar{p}$ is the zero-perversity, in
which case $IC^{\bar{p}}_\ast (X)$ is essentially the ordinary cochain complex
of $X$. (The Goresky-MacPherson intersection product raises perversities
in general.) Similarly, the differential complex $\Omega^\ast_{(2)}(X)$
of $L^2$-forms on the top stratum is not an algebra under
wedge product of forms. Using the intersection
space framework, the ordinary cochain complex $C^\ast (\ip X)$ of 
$\ip X$ \emph{is} a DGA, simply by employing the ordinary cup product. 
The theory $HI^\ast$ also addresses questions in type II
string theory related to the existence of massless D-branes arising in the
course of a Calabi-Yau conifold transition. These questions are answered by
$IH^\ast$ for IIA theory, and by $HI^\ast$ for IIB theory; see Chapter 3 of
\cite{banagl-intersectionspaces}. 
Furthermore, given a spectrum $E$ in the sense of stable homotopy theory,
one may form $EI^\ast_{\bar{p}} (X) = E^\ast (\ip X)$. This, then, yields an
approach to defining intersection versions of generalized cohomology theories
such as K-theory.

\begin{defn}
The category $\CWkcb$ of
\emph{$k$-boundary-split CW-complexes} consists of the following
objects and morphisms: Objects are pairs $(K,Y)$, where
$K$ is a simply connected CW-complex and $Y \subset C_k (K;\intg)$ is
a subgroup of the $k$-th cellular chain group of $K$ that arises
as the image $Y = s(\im \partial)$ of some splitting
$s: \im \partial \rightarrow C_k (K;\intg)$ of the boundary map
$\partial: C_k (K;\intg) \rightarrow \im \partial (\subset C_{k-1} (K;\intg))$.
(Given $K$, such a splitting always exists, 
since $\im \partial$ is free abelian.) A morphism
$(K,Y_K) \rightarrow (L,Y_L)$ is a cellular map
$f: K \rightarrow L$ such that $f_\ast (Y_K)\subset Y_L$.
\end{defn}
Let $\HoCW_{k-1}$ denote the category whose objects are
CW-complexes and whose morphisms are rel $(k-1)$-skeleton homotopy
classes of cellular maps. Let
\[ t_{<\infty}: \CWkcb \longrightarrow
   \HoCW_{k-1} \]
be the natural projection functor, that is,
$t_{<\infty} (K,Y_K) = K$ for an object $(K, Y_K)$ in 
$\CWkcb$, and $t_{<\infty} (f) = [f]$ for a morphism
$f: (K,Y_K) \rightarrow (L,Y_L)$ in $\CWkcb$.
The following theorem is proved in \cite{banagl-intersectionspaces}.
\begin{thm} \label{thm.generaltruncation}
Let $k\geq 3$ be an integer.
There is a covariant assignment 
$t_{<k}: \CWkcb 
 \longrightarrow \HoCW_{k-1}$ of objects and morphisms
together with a natural transformation
$\emb_k: t_{<k} \rightarrow t_{<\infty}$ such that for an object
$(K, Y)$ of $\CWkcb,$ one has
$H_r (t_{<k} (K,Y);\intg)=0$ for $r\geq k,$ and
\[ \emb_k (K,Y)_\ast: H_r (t_{<k} (K,Y);\intg) \stackrel{\cong}{\longrightarrow}
  H_r (K;\intg) \]
is an isomorphism for $r<k.$ 
\end{thm}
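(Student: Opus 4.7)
The plan is to construct $t_{<k}(K, Y)$ explicitly as a CW-complex obtained from the $(k-1)$-skeleton $K^{(k-1)}$ by attaching one new $k$-cell for each basis element of the free abelian group $Y$. First I fix a $\intg$-basis $\{ y_\beta \}$ of $Y$ and write each $y_\beta = \sum_\alpha n_{\beta \alpha} e^k_\alpha$ as an integer combination of the $k$-cells of $K$. Since $k - 1 \geq 2$, the pinch map $p_\beta \colon S^{k-1} \to \bigvee_\alpha S^{k-1}$ of multi-degree $(n_{\beta \alpha})$ is well defined up to homotopy; composing it with the wedge of the boundary attaching maps of the cells $e^k_\alpha$ produces a map $a_\beta \colon S^{k-1} \to K^{(k-1)}$ that realizes the $(k-1)$-cycle $\partial y_\beta$ on the chain level. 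I then set
\[
  t_{<k}(K, Y) := K^{(k-1)} \cup_{\sqcup_\beta a_\beta} \bigsqcup_\beta D^k_\beta .
\]

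By construction the cellular chain complex of $t_{<k}(K, Y)$ agrees with that of $K$ in degrees $\leq k-1$, while in degree $k$ it equals $Y$ with boundary the restriction $\partial|_Y$. The splitting hypothesis forces $Y \cap \ker \partial_k = 0$ and $\partial(Y) = \im \partial_k = B_{k-1}(K)$. Consequently $H_k(t_{<k}(K, Y); \intg) = 0$, $H_r = 0$ for $r > k$ since $t_{<k}(K, Y)$ has dimension $k$, $H_{k-1}(t_{<k}(K, Y); \intg) = Z_{k-1}/B_{k-1} = H_{k-1}(K; \intg)$, and $H_r(t_{<k}(K, Y); \intg) = H_r(K; \intg)$ for $r < k-1$ automatically. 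The map $\emb_k(K, Y) \colon t_{<k}(K, Y) \to K$ is defined to be the identity on $K^{(k-1)}$ and, on each $D^k_\beta$, to be the extension of $a_\beta$ obtained from the same pinch construction, now composed with the full characteristic maps $\Phi_\alpha \colon (D^k, S^{k-1}) \to (K^{(k)}, K^{(k-1)})$. On cellular chains, $\emb_k$ is the identity below degree $k$ and the inclusion $Y \hookrightarrow C_k(K; \intg)$ in degree $k$; hence it induces isomorphisms on $H_r$ for $r < k$.

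The main obstacle is functoriality. Given a morphism $f \colon (K, Y_K) \to (L, Y_L)$ in $\CWkcb$, the hypothesis $f_\ast(Y_K) \subset Y_L$ lets me expand $f_\ast(y_\beta)$ in the chosen basis of $Y_L$ with integer coefficients, and then extend the cellular map $f|_{K^{(k-1)}} \colon K^{(k-1)} \to L^{(k-1)}$ over each new $k$-cell of $t_{<k}(K, Y_K)$ by another pinch construction landing in the corresponding $k$-cells of $t_{<k}(L, Y_L)$. The delicate point is that two such extensions can differ on each $D^k_\beta$ by an element of $\pi_k$ of the target, so one has to show that any two extensions produced by this recipe agree on the $(k-1)$-skeleton and are homotopic rel $(k-1)$-skeleton, and that the induced assignment of rel-skeleton classes is compatible with composition; this is exactly the reason the target category is $\HoCW_{k-1}$ rather than an ordinary homotopy category. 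Once this is established, naturality of $\emb_k$ follows from the same bookkeeping: both $\emb_k(L, Y_L) \circ t_{<k}(f)$ and $f \circ \emb_k(K, Y_K)$ agree with $f$ on $K^{(k-1)}$ and realize the same cellular chain in degree $k$, so they determine the same class in $\HoCW_{k-1}$.
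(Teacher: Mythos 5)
Your construction of the object $t_{<k}(K,Y)$ by attaching new $k$-cells along pinch maps realizing a basis of $Y$, together with the cellular chain computation showing $H_r(t_{<k}(K,Y))=0$ for $r\geq k$ and $\emb_k$ an isomorphism on $H_r$ for $r<k$, is correct and essentially matches the construction in \cite{banagl-intersectionspaces}. The gap is in the treatment of morphisms, and it is the real content of the theorem, not a side issue to be flagged and deferred.

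First, your recipe for $t_{<k}(f)$ does not obviously produce a continuous map: the boundary of the pinch $D^k_\beta\to\bigvee_\gamma D^k_\gamma$, composed with the attaching maps $a'_\gamma$ of the new cells of $t_{<k}(L,Y_L)$, is a map $S^{k-1}\to L^{(k-1)}$ that agrees with $f\circ a_\beta$ only on cellular chains, not as a map. These two maps $S^{k-1}\to L^{(k-1)}$ define classes in $\pi_{k-1}(L^{(k-1)})$ which, a priori, coincide only after applying the Hurewicz map, and $L^{(k-1)}$ is simply connected but not $(k-2)$-connected in general. One can in fact show they agree in $\pi_{k-1}(L^{(k-1)})$, using the homotopical boundary $\partial^\pi:\,C_k\to\pi_{k-1}(L^{(k-1)})$ and the hypothesis $f_\ast(Y_K)\subset Y_L$, but this argument is not in your write-up, and without it the existence of the extension, which you treat as automatic, is unjustified. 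Second, you name the well-definedness of $t_{<k}(f)$ in $\HoCW_{k-1}$ and compatibility with composition as the delicate points and then leave them unproved. The difference between two extensions that agree on $K^{(k-1)}$ is a cochain in $C^k\bigl(t_{<k}(K,Y_K),K^{(k-1)};\pi_k(t_{<k}(L,Y_L))\bigr)$, and since there are no $(k+1)$-cells, this difference must \emph{vanish} (not merely be a coboundary) for the two maps to be equal in $\HoCW_{k-1}$. Nothing forces $\pi_k(t_{<k}(L,Y_L))$ to vanish, so this is a genuine obstruction, and it is exactly here that Banagl's proof invests heavy machinery (the virtual cell groups $\pi_k(\cdot,\cdot)$, compression-rigidity). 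Third, your argument for the naturality square — that both $\emb_k(L,Y_L)\circ t_{<k}(f)$ and $f\circ\emb_k(K,Y_K)$ agree with $f$ on the $(k-1)$-skeleton and "realize the same cellular chain in degree $k$" — again only controls the Hurewicz image of the difference cochain in $H_k(L)$, whereas what is needed is vanishing in $\pi_k(L)$. The theorem's guarded wording "covariant assignment" rather than "functor" reflects precisely the fact that preservation of composition is not free even after the naturality of $\emb_k$ is secured. In summary, the object-level part of your proposal is sound, but the morphism-level part, which is the heart of the theorem, is missing.
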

This means in particular that given a morphism $f$, one has squares
\[ \xymatrix{
t_{<k}(K,Y_K)  \ \ \ar[r]^{\emb_k (K,Y_K)} \ar[d]_{t_{<k}(f)}  & 
 \  \ t_{<\infty} (K,Y_K) \ar[d]^{t_{<\infty}(f)} \\
t_{<k}(L,Y_L) \ \ \ar[r]^{\emb_k (L,Y_L)} & \  \
t_{<\infty} (L,Y_L)
} \]
that commute in $\HoCW_{k-1}$.
If $k\leq 2$ (and the CW-complexes are simply connected), then it is of course
a trivial matter to construct such truncations. \\

Let $\bar{p}$ be a perversity.
Let $X$ be an $n$-dimensional compact oriented pseudomanifold with
isolated singularities $x_1, \ldots, x_w,$ $w\geq 1.$ We assume the complement
of the singularities to be a smooth manifold. Furthermore, to be able to apply
the general spatial truncation Theorem \ref{thm.generaltruncation}, we require
the links $L_i = \operatorname{Link}(x_i)$ to be simply connected. This assumption
is not always necessary, as in many non-simply connected situations, ad hoc truncation constructions can
be used.
The $L_i$ are closed smooth
manifolds and a small neighborhood of $x_i$ is homeomorphic to the open
cone on $L_i$.
Every link $L_i$, $i=1,\ldots,w,$ can be given the structure of a CW-complex.
If $k=n-1-\bar{p}(n)\geq 3,$ we can and do fix 
completions $(L_i,Y_i)$ of $L_i$ so that every $(L_i,Y_i)$ is an object in
$\CWkcb$. If $k\leq 2,$ no groups $Y_i$ have to be chosen.
Applying the truncation $t_{<k}: \CWkcb \rightarrow \HoCW_{k-1}$, we obtain a CW-complex
$t_{<k} (L_i,Y_i) \in Ob \HoCW_{k-1}$.
The natural transformation $\emb_k: t_{<k} \rightarrow t_{<\infty}$
of Theorem \ref{thm.generaltruncation} gives homotopy classes $\emb_k (L_i, Y_i)$ 
represented by maps
\[ f_i: t_{<k}(L_i, Y_i) \longrightarrow L_i \]
such that for $r<k,$ 
\[ f_{i\ast}: H_r (t_{<k}(L_i, Y_i)) \cong H_r (L_i), \]
while $H_r (t_{<k}(L_i, Y_i))=0$ for $r\geq k$.
Let $M$ be the compact manifold with boundary obtained by removing from $X$
open cone neighborhoods of the singularities $x_1, \ldots, x_w$.
The boundary is the disjoint union of the links,
\[ \partial M = \bigsqcup_{i=1}^w L_i. \]
Let 
\[ L_{<k} = \bigsqcup_{i=1}^w t_{<k}(L_i, Y_i) \]
and define a map
\[ g: L_{<k} \longrightarrow M \]
by composing
\[ L_{<k} \stackrel{f}{\longrightarrow} \partial M \longrightarrow M, \]
where $f = \bigsqcup_i f_i$. The intersection space is the
homotopy cofiber of $g$:

\begin{defn} \label{def.intspisol}
The \emph{perversity $\bar{p}$ intersection space}
$\ip X$ of $X$ is defined to be
\[ \ip X = \operatorname{cone}(g) = M \cup_g \cone (L_{<k}). \]
\end{defn}

Thus, to form the intersection space, we attach the cone on a suitable spatial homology
truncation of the link to the exterior of the singularity along the
boundary of the exterior. The two extreme cases of this construction
arise when $k=1$ and when $k$ is larger than the dimension of the link.
In the former case, assuming $w=1$, $t_{<1} (L)$ is a point 
and thus $\ip X$ is homotopy equivalent to the 
nonsingular top stratum of $X$. In the latter case
no actual truncation has to be performed, $t_{<k}(L)=L$,
$\emb_k (L)$ is the identity map and thus $\ip X = X$ (again assuming $w=1$).
If the singularities are not
isolated, one attempts to do fiberwise spatial homology truncation applied
to the link bundle. Such fiberwise truncation may be obstructed, however.
If $\bar{p} = \bar{m}$ is the lower middle perversity, then we shall
briefly write $IX$ for $\imi X$. We shall put $HI^{\bar{p}}_\ast (X) =
H_\ast (\ip X)$ and $HI_\ast (X) = H_\ast (IX)$; similarly for cohomology.
When $X$ has only one singular point, there are canonical homotopy classes of maps
\[ M \longrightarrow IV \longrightarrow V \]
described in Section 2.6.2 of \cite{banagl-intersectionspaces}. The first class can
be represented by the inclusion $M\hookrightarrow IV$. A particular representative
$\gamma: IV\to V$ of the second class is described in the proof of
Proposition \ref{map}. If $V$ has several isolated singular points, the target of the
second map has to be slightly modified by identifying all the singular points.
If $V$ is connected, then this only changes the first homology. The intersection
homology does not change at all.
Two perversities $\bar{p}$ and $\bar{q}$ are called \emph{complementary}
if $\bar{p}(s)+\bar{q}(s)=s-2$ for all $s=2,3,\ldots$. The following result
is established in \emph{loc. cit}.
\begin{thm} (Generalized Poincar\'e Duality.)
 Let $\bar{p}$ and $\bar{q}$ be complementary
perversities. There is a nondegenerate intersection form
\[ \redh I^{\bar{p}}_i (X) \otimes
  \redh I^{\bar{q}}_{n-i}(X) \longrightarrow \rat \]
which is compatible with the intersection form on the exterior of the singularities.
\end{thm}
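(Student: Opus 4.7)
The plan is to construct the pairing chain-theoretically and verify its nondegeneracy by a five-lemma argument combining three sources of duality: Lefschetz duality for the oriented compact manifold with boundary $(M,\partial M)$, Poincar\'e duality for the closed oriented links $L_i$, and the numerical identity $k_{\bar p} + k_{\bar q} = n$ forced by complementarity (since $\bar p(n)+\bar q(n)=n-2$ and $k_{\bar p}=n-1-\bar p(n)$).

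First I would unwind the definition. From Definition \ref{def.intspisol}, $\ip X$ is the mapping cone of $g:L_{<k_{\bar p}} \to M$, so
\[ \redh I^{\bar p}_i(X) \cong H_i(M,L_{<k_{\bar p}}), \]
and analogously for $\bar q$. This fits into the long exact sequence
\[ \cdots \to H_i(L_{<k_{\bar p}}) \to H_i(M) \to \redh I^{\bar p}_i(X) \to H_{i-1}(L_{<k_{\bar p}}) \to \cdots \]
Since $L=\bigsqcup L_i$ is a disjoint union of closed oriented $(n-1)$-manifolds, Poincar\'e duality on $L$ pairs $H_r(L)$ with $H_{n-1-r}(L)$. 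The central observation is that, after restricting this pairing, $H_r(L_{<k_{\bar p}})$ (nontrivial only for $r<k_{\bar p}$) pairs perfectly against $H_{n-1-r}$ of the \emph{spatial cotruncation} $L^{\geq k_{\bar p}}$ (nontrivial only for $n-1-r\geq k_{\bar p}$, equivalently $r\leq k_{\bar q}-1$). Because $k_{\bar p}+k_{\bar q}=n$, the indices match exactly.

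Next I would realize $\redh I^{\bar q}_{n-i}(X)$ in the cotruncation picture. Using the cofibration $L_{<k_{\bar q}} \to L \to L^{\geq k_{\bar q}}$ and comparing with $\partial M\hookrightarrow M$, one rewrites $H_{n-i}(M,L_{<k_{\bar q}})$ in a form built from $H^\ast$ and the cotruncation of $L$, which by the link duality above is isomorphic to the linear dual of the corresponding truncation data for $\bar p$. Combined with Lefschetz duality on $(M,\partial M)$, giving $H_i(M)\cong H^{n-i}(M,\partial M)$ and $H_i(M,\partial M)\cong H^{n-i}(M)$, I assemble a commutative ladder
\[
\begin{array}{ccccccc}
H_i(L_{<k_{\bar p}}) & \to & H_i(M) & \to & \redh I^{\bar p}_i(X) & \to & H_{i-1}(L_{<k_{\bar p}}) \\
\downarrow{\scriptstyle \cong} & & \downarrow{\scriptstyle \cong} & & \downarrow & & \downarrow{\scriptstyle \cong} \\
H_{n-i}(L^{\geq k_{\bar p}})^\ast & \to & H_{n-i}(M,\partial M)^\ast & \to & \redh I^{\bar q}_{n-i}(X)^\ast & \to & H_{n-i+1}(L^{\geq k_{\bar p}})^\ast
\end{array}
\]
whose vertical maps outside the middle are isomorphisms. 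The five-lemma then yields the desired nondegenerate pairing. Compatibility with the intersection form on the exterior $M^\circ$ of the singularities is automatic, because both pairings arise from the same cap product with the fundamental class of $(M,\partial M)$.

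The main obstacle is the link step. Spatial homology truncation $t_{<k}$ is constructed, but there is no canonical dual construction $t_{\geq k}$, and a generic splitting $Y \subset C_{k_{\bar p}}(L)$ need not be orthogonal to a splitting $Y' \subset C_{k_{\bar q}}(L)$ with respect to the Poincar\'e intersection pairing on chains. Arranging \emph{compatible} splittings so that the spatial cotruncation $L^{\geq k_{\bar p}}$ exists (realizing the chain-level quotient $C_\ast(L)/t_{<k_{\bar p}}$ up to homotopy) and is Poincar\'e dual to $L_{<k_{\bar q}}$ is the genuine technical content; it is essentially a discrete Hodge-type decomposition of $C_\ast(L)$ adapted to the PD pairing, and must be carried out functorially enough to pass through the cofiber defining $\ip X$.
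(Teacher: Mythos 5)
Your overall strategy --- realizing $\redh I^{\bar{p}}_i(X)$ as $H_i(M,L_{<k_{\bar{p}}})$, comparing its long exact sequence against the $\rat$-linear dual of a companion sequence for $\bar{q}$, and feeding Lefschetz duality on $(M,\partial M)$ together with Poincar\'e duality on $L$ into a five-lemma argument --- is exactly the route taken in \cite{banagl-intersectionspaces}. But your ``central observation'' is stated incorrectly. Poincar\'e duality on the closed oriented $(n-1)$-manifold $L$ pairs $H_r(L)$ with $H_{n-1-r}(L)$; hence the truncation $L_{<k_{\bar{p}}}$ (homology supported in degrees $<k_{\bar{p}}$) pairs against the cotruncation at the \emph{complementary} cutoff, $L^{\geq k_{\bar{q}}}$ (homology supported in degrees $\geq k_{\bar{q}}$), and the ranges $r<k_{\bar{p}}$ and $n-1-r\geq k_{\bar{q}}$ coincide precisely because $k_{\bar{p}}+k_{\bar{q}}=n$. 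With $L^{\geq k_{\bar{p}}}$ as you have written, the two supports are $r<k_{\bar{p}}$ and $r<k_{\bar{q}}$, which disagree whenever $k_{\bar{p}}\neq k_{\bar{q}}$, so the proposed pairing is degenerate. Correspondingly the link entries in your ladder should read $\redh_{n-1-i}(L^{\geq k_{\bar{q}}})^\ast$ and $\redh_{n-i}(L^{\geq k_{\bar{q}}})^\ast$: both degrees are off by one and both cutoffs carry the wrong perversity.

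By contrast, the obstacle you flag at the end is not a genuine one. No chain-level cotruncation ``orthogonal'' to the chosen splitting $Y$ is needed: $L^{\geq k}$ is just the mapping cone of $f:L_{<k}\to L$, and the cofibration long exact sequence shows automatically that $\redh_j(L^{\geq k})=0$ for $j<k$ and $\redh_j(L^{\geq k})\cong H_j(L)$ for $j\geq k$; since we work over $\rat$, the induced pairing $H_r(L_{<k_{\bar{p}}})\otimes\redh_{n-1-r}(L^{\geq k_{\bar{q}}})\to\rat$ is then nondegenerate for free, with no Hodge-type decomposition of $C_\ast(L)$ required. What does require care --- and is the real technical content of the proof in \emph{loc.\ cit.} --- is identifying the bottom row of your ladder as an exact sequence (it is the dual of the sequence arising from the octahedral cofibration $L^{\geq k_{\bar{q}}}\to I^{\bar{q}}X\to M/L$, not of the defining cone sequence for $I^{\bar{q}}X$) and verifying that every square commutes, i.e.\ that the connecting homomorphisms and inclusion-induced maps are compatible with the cap-product dualities on $M$ and on $L$.
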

The following formulae
for $\redh I^{\bar{p}}_\ast (X)$ are available (recall $k=n-1-\bar{p}(n)$):
\[ \redh I^{\bar{p}}_i (X)= \begin{cases}
  H_i (M),& i>k \\ H_i (M,\partial M),& i<k. \end{cases} \]
In the cutoff-degree $k$, we have a T-diagram with exact row and exact column:
\[ \small
\xymatrix@C=10pt{
& & 0 \ar[d] & & \\
0 \ar[r] & \ker (H_k (M) \to H_k (M,L)) \ar[r] &
H_k (M) \ar[r] \ar[d] & IH_k (X) \ar[r] & 0 \\
& & HI_k (X) \ar[d] & & \\
& & \im (H_k (M,L) \to H_{k-1} (L)) \ar[d] & & \\
& & 0 & &
} \]
\normalsize
The cohomological version of this diagram is
\[ \small
\xymatrix@C=10pt{
& & 0 \ar[d] & & \\
0 \ar[r] & \ker (H^k (M,L) \to H^k (M)) \ar[r] &
H^k (M,L) \ar[r] \ar[d] & IH^k (X) \ar[r] & 0 \\
& & HI^k (X) \ar[d] & & \\
& & \im (H^k (M) \to H^k (L)) \ar[d] & & \\
& & 0 & &
} \]
\normalsize
When $X$ is a complex variety of complex dimension $n$ and $\bar{p}=\bar{m}$, then
$k=n$. If, moreover, $n$ is even, it was shown in \cite{banagl-intersectionspaces}[Sect.2.5] that the Witt elements (over the rationals) corresponding to the intersection form on $IX$ and, respectively,  the Goresky-MacPherson intersection pairing on the middle intersection homology group,  coincide. In particular, the signature $\sigma(IX)$ of the intersection space equals the Goresky-MacPherson intersection homology signature of $X$. For results comparing the Euler characteristics of the two theories, see \cite{banagl-intersectionspaces}[Cor.2.14] and Proposition \ref{prop1} below.


\section{Background on Hypersurface Singularities}
\label{sec.backgroundsing}

Let $f$ be a homogeneous polynomial in $n+2$ variables with complex coefficients such that the
complex projective hypersurface
\[ V = V(f) = \{ x\in \mathbb{P}^{n+1} ~|~ f(x)=0 \} \]
has one isolated singularity $x_0$. 
Locally, identifying $x_0$ with the origin of $\cplx^{n+1}$, the singularity is described
by a reduced analytic function germ
\[ g: (\cplx^{n+1},0)\longrightarrow (\cplx,0). \]
Let $B_\epsilon \subset \cplx^{n+1}$ be a closed ball of radius $\epsilon >0$ centered at the origin and let
$S_\epsilon$ be its boundary, a sphere of dimension $2n+1$. 
Choose $\epsilon$ small enough so that \\

\noindent (1) the intersection $V\cap B_\epsilon$ is homeomorphic to the cone over the link
$L_0 = V\cap S_\epsilon = \{ g=0 \} \cap S_\epsilon$ of the singularity $x_0$, and \\

\noindent (2) the Milnor map of $g$ at radius $\epsilon,$
\[ \frac{g}{|g|}: S_\epsilon - L_0 \longrightarrow S^1, \]
is a (locally trivial) fibration. \\

\noindent The link $L_0$ is an $(n-2)$-connected $(2n-1)$-dimensional submanifold
of $S_\epsilon$.
The fibers of the Milnor map are open smooth manifolds of real dimension
$2n$. Let $F_0$ be the closure in $S_{\epsilon}$ of the fiber of $g/|g|$ over $1\in S^1$.
Then $F_0$, the \emph{closed Milnor fiber of the singularity} is a compact manifold with 
boundary $\partial F_0 = L_0,$ the link of $x_0$.
Via the fibers of the Milnor map as pages, $S_\epsilon$ receives an open book
decomposition with binding $L_0$. \\

Let $\pi: X\to S$ be a smooth deformation of $V$, where $S$ is a small disc of radius,
say, $r>0$ centered at the origin of $\cplx$. The map $\pi$ is assumed to be proper.
The singular variety $V$ is the special fiber $V=\pi^{-1}(0)$ and the general fibers
$V_s = \pi^{-1}(s),$ $s\in S,$ $s\not= 0$, are smooth projective $n$-dimensional
hypersurfaces. The space $X$ is a complex manifold of dimension $n+1$. Given $V$ as above, we shall show
below that such a smooth deformation $\pi$ can always be constructed.
Let $B_\epsilon (x_0)$ be a small closed ball in $X$ about the singular point $x_0$
such that
\begin{enumerate}
\item $B_\epsilon (x_0) \cap V$ can be identified with the cone on $L_0$,
\item $F= B_\epsilon (x_0) \cap V_s$ can be identified with $F_0$.
\end{enumerate}
(Note that this ball $B_\epsilon (x_0)$ is different from the ball $B_\epsilon$ used
above: the former is a ball in $X$, while the latter is a ball in $\mathbb{P}^{n+1}$.)
Let $B = \interi B_\epsilon (x_0)$ and
let $M_0$ be the compact manifold $M_0 = V - B$ with boundary 
$\partial M_0 = L_0$. For $0<\delta <r,$ set 
$S_\delta = \{ z\in S ~|~ |z| <\delta \}$,
$S^\ast_\delta = \{ z\in S ~|~ 0<|z| <\delta \}$
and $N_\delta = \pi^{-1}(S_\delta) - B$.
Choose $\delta \ll \epsilon$ so small that
\begin{enumerate}
\item $\pi|: N_\delta \to S_\delta$ is a proper smooth submersion and
\item $\pi^{-1}(S_\delta)\subset N:= N_\delta \cup B$.
\end{enumerate}
For $s\in S^\ast_\delta,$ we shall construct the \emph{specialization map}
\[ r_s: V_s \longrightarrow V. \]
By the Ehresmann fibration theorem, 
$\pi|: N_\delta \to S_\delta$ is a locally trivial fiber bundle projection.
Since $S_\delta$ is contractible, this is a trivial bundle, that is, there exists
a diffeomorphism $\phi: N_\delta \to S_\delta \times M_0$ (recall that
$M_0$ is the fiber of $\pi|$ over $0$) such that
\[ \xymatrix{
N_\delta \ar[rr]^{\phi}_{\cong} \ar[rd]_{\pi|} & & S_\delta \times
  M_0 \ar[ld]^{\pi_1} \\
& S_\delta & 
} \]
commutes. The second factor projection $\pi_2: S_\delta \times M_0 \to M_0$
is a deformation retraction. Hence $\rho_\delta = \pi_2 \phi: N_\delta \to M_0$
is a homotopy equivalence.
Let $M$ be the compact manifold $M = V_s - B$ with boundary
$L := \partial M$, $s\in S^\ast_\delta$.
We observe next that the composition
\[ M \hookrightarrow N_\delta \stackrel{\rho_\delta}{\longrightarrow} M_0 \]
is a diffeomorphism. Indeed,
\[ M= \pi|^{-1} (s) = \phi^{-1} \pi^{-1}_1 (s) = \phi^{-1}(\{ s \} \times M_0) \]
is mapped by $\phi$ diffeomorphically onto $\{ s \} \times M_0,$ which is then
mapped by $\pi_2$ diffeomorphically onto $M_0$. This fixes a diffeomorphism
\[ \psi: (M,L) \stackrel{\cong}{\longrightarrow} (M_0, L_0). \]
Thus $L$ is merely a displaced copy of the link $L_0$ of
$x_0$ and $M$ is a displaced copy of the exterior $M_0$ of the singularity $x_0$.
The restricted homeomorphism $\psi|: L\to L_0$ can be levelwise extended to a
homeomorphism $\cone (\psi|): \cone L \to \cone L_0$; the cone point is mapped to
$x_0$. We obtain a commutative diagram
\[ \xymatrix{
M \ar[d]_{\psi}^{\cong} & L \ar[l] \ar[r] \ar[d]_{\psi|}^{\cong} & \cone (L) 
  \ar[d]_{\cone (\psi|)}^\cong  \\
M_0 & L_0 \ar[l] \ar[r] & \cone (L_0),
} \]
where the horizontal arrows are all inclusions as boundaries.
Let us denote by $W:=M\cup_L \cone L$ the pushout of the top row. Since $V$ is topologically the pushout of the bottom row,
$\psi$ induces a homeomorphism
\[ W \stackrel{\cong}{\longrightarrow} V. \]
We think of $W$ as a displaced copy of $V$, and shall work primarily with this topological model of 
$V$. We proceed with the construction of the specialization map.
Using a collar, we may write $M_0$ as $M_0 = \overline{M}_0 \cup [-1,0]\times L_0$ with 
$\overline{M}_0$ a compact codimension $0$ submanifold of $M_0$, which is diffeomorphic to $M_0$.
The boundary of $\overline{M}_0$ corresponds to $\{ -1 \} \times L_0.$ Our model
for the cone on a space $A$ is $\cone (A) = [0,1]\times A / \{ 1 \} \times A.$
The specialization map $r_s: V_s \to V$ is a composition
\[ V_s \hookrightarrow N \stackrel{\rho}{\longrightarrow} V, \]
where $\rho$ is a homotopy equivalence to be constructed next.
On $\rho^{-1}_\delta (\overline{M}_0) \subset N,$ $\rho$ is given by $\rho_\delta$.
The ball $B$ is mapped to the singularity $x_0$. The remaining piece
$C=\rho^{-1}_\delta ([-1,0] \times L_0) \subset N_\delta \subset N$ is mapped to
$[-1,0]\times L_0 \cup \cone (L_0)$ $= [-1,1] \times L_0 / \{ 1 \} \times L_0$
by stretching $\rho_\delta$ from $[-1,0]$ to $[-1,1]$. In more detail: if
$\rho_\delta|: C \to [-1,0]\times L_0$ is given
by $\rho_\delta (x) = (f(x), g(x))$ for smooth maps $f:C\to [-1,0],$ and
$g:C \to L_0,$ then $\rho$ is given on $C$ by
\[ \rho (x) = (h(f(x)), g(x)), \]
where $h: [-1,0]\to [-1,1]$ is a smooth function such that $h(t)=t$ for $t$ close to
$-1$ and $f(t)=1$ for $t$ close to $0$.
This yields a continuous map $\rho: N\to V$ and finishes the construction of the 
specialization map.\\

We shall now show how a smooth deformation as above can be constructed, given
a homogeneous polynomial $f:\mbc^{n+2} \to \mbc$, of degree $d$, defining a complex projective hypersurface 
\[ V = V(f) = \{ x\in \mathbb{P}^{n+1} ~|~ f(x)=0 \} \] with only isolated singularities $p_1, \cdots, p_r$. (We allow here more than just one isolated singularity, as this more general setup will be needed later on.) For each $i \in \{1,\cdots,r\}$, let \[ g_i: (\cplx^{n+1},0)\longrightarrow (\cplx,0). \] be a local equation for $V(f)$ near $p_i$. Let $l$ be a linear form on $\mbc^{n+2}$ such that the corresponding hyperplane in $\mathbb{P}^{n+1}$ does not pass through any of the points $p_1,\cdots, p_r$. By Sard's theorem, there exists $r>0$ so that for any $s \in \mbc$ with $0< \vert s \vert < r$, the hypersurface 
\[ V_s := V(f+s \cdot l^d) \subset \mathbb{P}^{n+1}  \] is non-singular. Define 
\[ X:= \bigcup_{s \in  S} \left( \{ s \} \times V_s \right) \subset S \times \mathbb{P}^{n+1} \]  
with $\pi:X \to S$ the corresponding projection map. Then $X$ is a complex manifold of dimension $n+1$, and  for each $i \in \{1,\cdots, r\}$ the germ of the proper holomorphic map $\pi$ at $p_i$ is equivalent to $g_i$. Note that $\pi$ is smooth over the punctured disc  $S^{\ast} :=\{s \in \mbc  ~|~  0< \vert s \vert < r \}$, as $s=0$ is the only critical value of $\pi$.  Moreover, the fiber $\pi^{-1}(0)$ is the hypersurface $V$, and for any $s \in  S^{\ast},$ the corresponding fiber $\pi^{-1}(s)=V_s$ is a smooth $n$-dimensional complex projective hypersurface of degree $d$. Therefore, each of these $V_s$ ($s \in  S^{\ast}$) can be regarded as a smooth deformation of the given hypersurface $V=V(f)$.\\

Let us collect some facts and tools concerning the Milnor fiber $F \cong F_0$ of an isolated hypersurface singularity germ (e.g., see \cite{dimcasingtophyper,Mi}).
It is homotopy equivalent to a bouquet of $n$-spheres. The number $\mu$ of spheres in
this bouquet is called the {\it Milnor number} and can be computed as
\[ \mu = \dim_\cplx \frac{\mathcal{O}_{n+1}}{J_g}, \]
with $\mathcal{O}_{n+1} = \cplx \{ x_0, \ldots, x_n \}$ the $\cplx$-algebra of all convergent
power series in $x_0, \ldots, x_n$, and $J_g = (\partial g/\partial x_0, \ldots,
\partial g/\partial x_n)$ the Jacobian ideal of the singularity. The inclusion
$L =\partial F \hookrightarrow F$ is an $(n-1)$-equivalence; in particular
\[ H_i (L) \longrightarrow H_i (F) \]
is an isomorphism for $i<n-1$.
The specialization map $r_s:V_s \to V$ induces on homology the
\emph{specialization homomorphism}
\[ H_\ast (V_s)\longrightarrow H_\ast (V). \]
This fits into an exact sequence
\begin{equation} \label{equ.specialsequ}
0 \to H_{n+1} (V_s)\to H_{n+1}(V)\to H_n (F)\to H_n (V_s) \to H_n (V) \to 0,
\end{equation}
which describes the effect of the deformation on homology in degrees $n, n+1$.
Of course,
\[ H_n (F) \cong H_n (\bigvee^\mu S^n) \cong \rat^\mu. \]
In degrees $i\not= n,n+1,$ the specialization homomorphism is an isomorphism
\[ H_i (V_s) \cong H_i (V). \]
Associated with the Milnor fibration $\interi F_0 \hookrightarrow S_\epsilon - L_0 \to S^1$ is a 
monodromy homeomorphism $h_0: \interi F_0 \to \interi F_0$. Using the identity
$L_0 \to L_0,$ $h_0$ extends to a homeomorphism
$h: F_0 = \interi F_0 \cup L_0 \to F_0$ because $L_0$ is the binding of the open
book decomposition. This homeomorphism induces the \emph{monodromy operator}
\[ T = h_\ast: H_\ast (F_0) \stackrel{\cong}{\longrightarrow} H_\ast (F_0). \]
The difference between the monodromy operator and the identity fits into the
\emph{Wang sequence} of the fibration,
\begin{equation} \label{equ.wang}
0 \to H_{n+1} (S_\epsilon - L_0) \to H_n (F_0) 
 \stackrel{T-1}{\longrightarrow} H_n (F_0) \to
 H_n (S_\epsilon - L_0)\to 0, \ \ \ \text{if} \ n\geq2, 
\end{equation}
and for $n=1$:
\begin{equation} \label{w2}
0 \to H_{2} (S_\epsilon - L_0) \to H_1 (F_0) 
 \stackrel{T-1}{\longrightarrow} H_1 (F_0) \to
 H_1 (S_\epsilon - L_0)\to H_0(F_0) \cong \rat \to 0. 
\end{equation}


\section{Deformation Invariance of the Homology of Intersection Spaces}

The main result of this paper asserts that under certain monodromy assumptions (see below), the intersection space homology 
$\redh I_\ast$ for the middle perversity is a smoothing/deformation invariant. Recall that we take homology always with
rational coefficients.
As mentioned in the Introduction, we formally exclude the surface case $n=2$, as a sufficiently general construction
of the intersection space in this case is presently not available, although
the theory $HI^\ast (V;\real)$ has a de Rham description
by global differential forms on the regular part of $V$, \cite{banagl-derhamintspace},
which does not require links to be simply-connected. Using this description
of $HI^\ast$, the theorem can be seen to hold for $n=2$ as well.

\begin{thm}\label{thm1} (Stability Theorem.)
Let $V$ be a complex projective hypersurface of dimension $n\not= 2$ with one isolated
singularity, and let $V_s$ be a nearby smooth deformation of $V$. 
Then, for all $i<2n$ and $i\not= n$, we have
\[ \redh_i (V_s) \cong \redh I_i (V). \]
Moreover, 
\[ H_n (V_s) \cong HI_n (V) \]
if, and only if, the monodromy operator $T$ of the singularity is trivial.
\end{thm}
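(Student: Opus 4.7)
My plan is to exploit the common ``exterior'' $M$ of the singularity: both $V_s$ and $IV$ admit pushout decompositions along the link $L=\partial M$, namely $V_s = M\cup_L F$ (attaching the closed Milnor fibre) and $IV = M\cup_g \cone(L_{<n})$ (attaching the cone on the spatial homology truncation $L_{<n}=t_{<n}(L,Y)$ along $g:L_{<n}\to L\hookrightarrow M$). The strategy is to compute both $H_\ast(V_s)$ and $HI_\ast(V)$ in terms of $H_\ast(M)$, match them degree by degree, and identify the obstruction at $i=n$ with monodromy triviality via the Wang sequence.

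First I would compute $H_\ast(V_s)$ using the long exact sequence of the pair $(V_s, M)$. Excision identifies $H_i(V_s, M)\cong H_i(F, L)$, and Lefschetz duality for the oriented $2n$-manifold-with-boundary $F\simeq \bigvee^\mu S^n$ gives $H_i(F,L)\cong H^{2n-i}(F)$, which vanishes unless $i=n$ (where it equals $\rat^\mu$) or $i=2n$. Hence $H_i(V_s)\cong H_i(M)$ for all $i<2n$ with $i\neq n$, together with a short exact sequence
\[ 0\to H_n(M)\to H_n(V_s)\to \rat^\mu \to 0. \]

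Next I would compute $HI_\ast(V)$ from the cofibration $L_{<n}\xrightarrow{g} M \to IV$. By Theorem~\ref{thm.generaltruncation} and the $(n-2)$-connectivity of $L$, the reduced homology $\tilde H_\ast(L_{<n})$ is concentrated in degree $n-1$, where it equals $H_{n-1}(L)$. The crucial intermediate claim is that $g_\ast:H_{n-1}(L)\to H_{n-1}(M)$ vanishes: the inclusion $L\hookrightarrow V_s$ factors both through $M$ and through $F$, so the map $H_{n-1}(L)\to H_{n-1}(V_s)$ equals $H_{n-1}(L)\to H_{n-1}(F)=0\to H_{n-1}(V_s)$; since $H_{n-1}(M)\xrightarrow{\cong}H_{n-1}(V_s)$ by the previous paragraph, the factor $H_{n-1}(L)\to H_{n-1}(M)$ must itself be zero. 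The resulting long exact sequence then yields $HI_i(V)\cong H_i(M)$ for $i\neq n$, and
\[ 0\to H_n(M)\to HI_n(V)\to H_{n-1}(L)\to 0. \]

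Combining the two computations gives $HI_i(V)\cong H_i(V_s)$ for every $i<2n$, $i\neq n$. At $i=n$ the two short exact sequences share the left term $H_n(M)$, so $HI_n(V)\cong H_n(V_s)$ if and only if $\dim H_{n-1}(L)=\mu$. To convert this into a monodromy condition, I would apply the Wang sequence~(\ref{equ.wang}) together with Alexander/Poincar\'e duality $\tilde H_i(S_\epsilon-L_0)\cong H_{i-1}(L_0)$ in $S_\epsilon=S^{2n+1}$, rewriting the Wang sequence as
\[ 0\to H_n(L)\to H_n(F)\xrightarrow{T-1} H_n(F)\to H_{n-1}(L)\to 0. \]
Thus $\dim H_{n-1}(L)=\mu-\rk(T-1)$, which equals $\mu$ precisely when $T=\id$. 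The steps I expect to be most delicate are the cancellation $g_\ast=0$ (where one must carefully exploit the two factorizations of $L\hookrightarrow V_s$) and the geometric realization of all these isomorphisms by a single continuous map $IV\to V_s$ promised in Proposition~\ref{map}, which requires the composition $L_{<n}\to L\hookrightarrow F$ to be null-homotopic so that it extends over $\cone L_{<n}$; since $F$ is $(n-1)$-connected and $H_n(L_{<n})=0$, the primary obstruction class in $H^n(L_{<n};\pi_n(F))$ can be shown to vanish, yielding the desired extension.
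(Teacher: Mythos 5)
Your key intermediate step --- the vanishing of $g_*\colon H_{n-1}(L)\to H_{n-1}(M)$ --- is circular as argued and false in general. The circularity: to conclude $g_*=0$ you invoke ``$H_{n-1}(M)\xrightarrow{\cong}H_{n-1}(V_s)$ by the previous paragraph,'' but the long exact sequence of $(V_s,M)$ gives only a surjection whose kernel is $\im\bigl(\partial\colon H_n(V_s,M)\to H_{n-1}(M)\bigr)$; since $H_{n-1}(F)=0$ forces $\partial\colon H_n(F,L)\to H_{n-1}(L)$ to be onto, that kernel is precisely $\im(g_*)$. Injectivity in degree $n-1$ is therefore \emph{equivalent} to the very statement $g_*=0$ you are trying to derive from it.

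Moreover $g_*$ need not vanish, even with a single node. Take $V$ the quadric cone in $\mathbb{P}^4$ ($n=3$, vertex the unique singularity). By Lefschetz duality and excision, $\rk H_2(M)=\rk H_4(M,L)=\rk H_4(V)$, and the specialization sequence gives $\rk H_4(V)=2$ (here $H_4(V_s)=H_3(F)=\rat$, $H_3(V_s)=0$). But $\rk H_2(M,L)=\rk H_2(V)=1$, so $\rk\im(g_*)=1\neq 0$. In this example your asserted isomorphisms $H_{n-1}(V_s)\cong H_{n-1}(M)$ and $HI_{n-1}(V)\cong H_{n-1}(M)$, and your short exact sequence $0\to H_n(M)\to H_n(V_s)\to\rat^\mu\to 0$, all fail (one has $H_3(M)=H_3(V_s)=0$ while $\mu=1$). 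The correct formula, used in the paper, is $\redh I_i(V)\cong H_i(M,\partial M)$ for $i<n$, which differs from $H_i(M)$ at $i=n-1$ exactly when $g_*\neq 0$. Your final conclusions happen to hold anyway --- both $\redh_{n-1}(V_s)$ and $\redh I_{n-1}(V)$ equal $H_{n-1}(M)/\im(g_*)\cong H_{n-1}(M,L)$, and at $i=n$ the terms $\rk\im(g_*)$ and $\rk H_n(M)$ cancel from the rank comparison, leaving $\rk H_n(V_s)-\rk HI_n(V)=\mu-\rk H_{n-1}(L)$ --- but your argument does not exhibit this cancellation and rests on a false lemma. The paper avoids $g_*$ entirely: it identifies $\redh I_i(V)\cong H_i(M,\partial M)\cong\redh_i(W)$ via the homeomorphism $W\cong V_s/F$, and separately $\redh_i(V_s)\cong H_i(V_s,F)\cong\redh_i(W)$ from the $(n-1)$-connectivity of $F$.
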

\begin{proof}
Since $V\cong W$ via a homeomorphism which near the singularity is given by levelwise
extension of a diffeomorphism of the links, 
we may prove the statement for $HI_\ast (W)$ rather than
$HI_\ast (V)$. (See also the proof of Proposition \ref{map} for the construction
of a homeomorphism $IV\cong IW$.)
Suppose $i<n$. Then the exact sequence
\[ 0 = \redh_i (F) \longrightarrow \redh_i (V_s) \longrightarrow
 H_i (V_s,F) \longrightarrow \redh_{i-1} (F)=0 \]
shows that
\[ \redh_i (V_s) \cong H_i (V_s,F). \]
Using the homeomorphism
\begin{equation} \label{equ.wmlvsf}
W \cong \frac{M}{L} = \frac{M\cup_L F}{F} = \frac{V_s}{F},
\end{equation}
we obtain an isomorphism
\[ \redh_i (V_s) \cong \redh_i (W). \]
Since for $i<n,$
\[ \redh I_i (W) \cong   H_i (M,\partial M) \cong \redh_i (W), \]
the statement follows. The case $2n>i>n$ follows from the case $i<n$ by
Poincar\'e duality: If $2n>i>n\geq 0,$ then $0<2n-i<n$ and thus
\[ HI_i (W)\cong HI_{2n-i}(W)^\ast \cong HI_{2n-i}(V_s)^\ast \cong
 H_i (V_s). \]
In degree $i=n$, the $T$-shaped diagrams of Section \ref{background} together with duality and excision yield:
\begin{eqnarray*} HI_n (W) &\cong& H_n(M) \oplus \im (H_n (M,L) \to H_{n-1} L) \\ &\cong& 
H_n (M,L) \oplus \im (H_n L \to H_n M) \\ &\cong&
H_n (W) \oplus \im (H_n L \to H_n M). \end{eqnarray*}
The exact sequence (\ref{equ.specialsequ}) shows that
\[ H_n (V_s) \cong H_n (W) \oplus \im (H_n F \to H_n V_s), \]
whence $H_n (V_s) \cong HI_n (V)$ if, and only if,
\begin{equation} \label{equ.rklmrkfvs}
\rk (H_n L \to H_n M) = \rk (H_n F \to H_n V_s). 
\end{equation}
At this point, we need to distinguish between the cases $n \geq 2$ and $n=1$.\\

Let us first assume that $n \geq 2$. Since $F$ is compact, oriented, and nonsingular, we may use Poincar\'e duality
to deduce $H_{n+1} (F,L)=0$ from $H_{n-1}(F)=0$. The exact sequence of the
pair $(F,L)$,
\[ 0= H_{n+1} (F,L) \stackrel{\partial_\ast}{\longrightarrow}
  H_n (L) \stackrel{j_\ast}{\longrightarrow} H_n (F), \]
implies that $j_\ast: H_n L \to H_n F$ is injective. The inclusion
$j: (M,L) \subset (V_s,F)$ induces an isomorphism
\[ j_\ast: H_{n+1} (M,L) \cong H_{n+1} (V_s,F), \]
by excision, cf. (\ref{equ.wmlvsf}). We obtain a commutative diagram
\[ \xymatrix{
H_{n+1} (V_s,F) \ar[r]^{\partial_\ast} & H_n (F) \\
H_{n+1} (M,L) \ar[u]^{j_\ast}_{\cong} \ar[r]^{\partial_\ast} &
H_n (L) \ar@{^{(}->}[u]_{j_\ast}
} \]
from which we see that
\[ \partial_\ast H_{n+1} (M,L) \cong j_\ast \partial_\ast H_{n+1}(M,L) =
 \partial_\ast j_\ast H_{n+1}(M,L) = \partial_\ast H_{n+1}(V_s,F). \]
Since
\begin{eqnarray*}
\rk (H_n L\to H_n M) & = & \rk H_n L - \rk (\partial_\ast:
  H_{n+1} (M,L)\to H_n L), \\
\rk (H_n F\to H_n V_s) & = & \rk H_n F - \rk (\partial_\ast:
  H_{n+1} (V_s,F)\to H_n F), 
\end{eqnarray*}
equality (\ref{equ.rklmrkfvs}) holds if, and only if,
\begin{equation} \label{equ.hnlhnf}
\rk H_n (L)= \rk H_n (F),
\end{equation}
that is, $\rk H_n (L)=\mu,$ the Milnor number. Using the Alexander duality
isomorphisms
\[ H_{n+1} (S_\epsilon - L) \cong H^{n-1}(L),~
   H_n (S_\epsilon - L) \cong H^n (L) \]
in the Wang sequence (\ref{equ.wang}), we get the exact sequence
\[ 0 \to H^{n-1} (L) \to H_n (F) 
 \stackrel{T-1}{\longrightarrow} H_n (F) \to
 H^n (L)\to 0, \]
which shows that
\[ \rk H_n (L) = \rk H_n F - \rk (T-1). \]
Hence (\ref{equ.hnlhnf}) holds iff $T-1=0$.\\

If $n=1$, the Milnor fiber $F$ is connected, but the link $L$ may have multiple circle components. 
Since $M \cong M_0$ has the homotopy type of a one-dimensional CW complex (as it is homotopic to an affine plane curve), the homology long exact sequence of the pair $(M,L)$ yields that $\partial_\ast :H_{2} (M,L) \to H_1(L)$ is injective. Thus,
\[ \rk (H_1 L\to H_1 M)  =  \rk H_1 L - \rk (\partial_\ast:
  H_{2} (M,L)\to H_1 L) = \rk H_1 L  - \rk H_{2} (M,L). \]
On the other hand, since $F$ has the homotopy type of a bouquet of circles,    
the homology long exact   sequence of the pair $(V_s,F)$ yields that $$\rk \left( i_\ast:  H_2(V_s) \to H_2(V_s,F) \right)=\rk H_2(V_s) =1.$$ Therefore, 
\begin{eqnarray*}
\rk (H_1 F\to H_1 V_s) & = & \rk H_1 F - \rk (\partial_\ast: H_{2} (V_s,F)\to H_1 F) \\
  & = & \rk H_1 F - \rk H_{2} (V_s,F) +1. 
\end{eqnarray*}
Since, by excision, $H_{2} (V_s,F) \cong H_{2} (M,L)$, the
equality (\ref{equ.rklmrkfvs}) holds if, and only if,
\begin{equation}\label{n1}
\rk H_1 (L)= \rk H_1 (F) +1.
\end{equation}
Finally, the Wang exact sequence (\ref{w2}) and Alexander Duality show that   
\[ \rk H_1 (L) = 1+ \rk H_1 F - \rk (T-1). \]
Hence (\ref{n1}) holds iff $T-1=0$.
\end{proof}

 \begin{remark} The only plane curve singularity germ with trivial monodromy operator is a node (i.e., an $A_1$-singularity), e.g., see \cite{O}. In higher dimensions, it is easy to see from the Thom-Sebastiani construction that $A_1$-singularities in an even number of complex variables have trivial monodromy as well.
 \end{remark}
 
 \begin{remark} The algebraic isomorphisms of Theorem \ref{thm1} are obtained abstractly, by computing ranks of the corresponding rational vector spaces. It would be desirable however, to have these algebraic isomorphisms realized by canonical arrows. This fact would then have the following interesting consequences. First, the dual arrows in cohomology (with rational coefficients) would become ring isomorphisms, thus providing non-trivial examples of computations of the internal cup product on the cohomology of an intersection space. Secondly, such canonical arrows would make it possible to import Hodge-theoretic information from the cohomology of the generic fiber $V_s$ onto the cohomology of the intersection space $IV$ associated to the singular fiber.  This program is realized in part in the next section.
 \end{remark}

 \begin{remark}\label{gen} The above theorem can also be formulated in the case of complex projective hypersurfaces with any number of isolated singularities by simply replacing $L$ by $\sqcup_{x \in {\rm Sing}(V)} L_x$ and similarly for the local link complements (where $L_x$ is the link of $x \in {\rm Sing}(V)$), $F$ by $\sqcup_{x \in {\rm Sing}(V)} F_x$ (for $F_x$ the local Milnor fiber at a singular point $x$) and $T$ by $\oplus_{x \in {\rm Sing}(V)}T_x$ (for $T_x:H_n(F_x) \to H_n(F_x)$ the corresponding local monodromy operator). The statement of Theorem \ref{thm1} needs to be modified as follows:
 \begin{itemize}
 \item[(a)] if $n \geq 2$, the changes appear for $i \in \{1, 2n-1 \}$. Indeed, in the notations of Theorem \ref{thm1} we have that: $H_1(M,\partial M) \cong H_1(W) \oplus \rat^{b_0(L)-1}$, so we obtain:
\begin{equation}\label{b1} HI_1(V) \cong H_1(V_s) \oplus  \rat^{b_0(L)-1}, \end{equation}
and similarly for $HI_{2n-1}(V)$, by Poincar\'e duality.
 \item[(b)] if $n=1$, we have an isomorphism $HI_1(V) \cong H_1(V_s)$ if, and only if, $\rk(T-1)=2(r-1)$, where $r$ denotes the number of singular points.
 \end{itemize}
 Indeed, in the cutoff-degree $n$, the $T$-shaped diagram of Section \ref{background} yields (as in the proof of Theorem \ref{thm1}):
\[ HI_n (V) \cong  H_n (M,L) \oplus \im (H_n L \to H_n M), \] where $M$ is obtained from $V$ by removing conical neighborhoods of the singular points. Since, by excision, $H_n(M,L) \cong H_n(V, {\rm Sing}(V))$, the long exact sequence for the reduced homology of the pair $(V, {\rm Sing}(V))$ shows that:
\[ H_n(M,L) \cong \begin{cases} H_n(V), \ \ \ \ \ \ \ \ \ \ \ {\rm if} \ n \geq 2, \\ H_1(V) \oplus \rat^{r-1}, \ {\rm if} \ n=1. \end{cases} \]
So the proof of Theorem \ref{thm1} in the case of the cutoff-degree $n$ applies without change if $n \geq 2$. On the other hand, if $n=1$, we get an isomorphism $HI_1(V) \cong H_1(V_s)$ if, and only if, $$(r-1) + \rk (H_1L \to H_1M) = \rk (H_1F \to H_1V_s).$$ The assertion follows now as in the proof of Theorem \ref{thm1}, by using the identity $$\rk H_1L - \rk H_1F=r-\rk(T-1),$$ which follows from the Wang sequence (\ref{w2}). \hfill$\square$
 
 \end{remark}

The discussion of Remark \ref{gen} also yields the following general result: 
\begin{thm}\label{cor1} Let $V$ be a complex projective $n$-dimensional hypersurface with only isolated
singularities, and let $V_s$ be a  nearby smoothing of $V$. Let  $L$, $\mu$ and $T$ denote the total link, total Milnor number and,  respectively, the total monodromy operator, i.e., $L:=\sqcup_{x \in {\rm Sing}(V)} L_x$, $\mu:=\sum_{x \in {\rm Sing}(V)} \mu_x$, and $T:=\oplus_{x \in {\rm Sing}(V)} T_x$, for $(L_x,\mu_x,T_x)$ the corresponding invariants of an isolated hypersurface singularity germ $(V,x)$. The Betti numbers  of the middle-perversity intersection space $IV$ associated to $V$ are computed as follows:
 \begin{itemize}
 \item[(a)] if $n \geq 2$:
 \[ b_i(IV)=b_i(V_s), \  \ i \notin \{1, n, 2n-1, 2n \} \]
\[ b_1(IV)=b_1(V_s) + b_0(L)-1=b_{2n-1}(IV), \]
 \[ b_n(IV)=b_n(V_s)+b_n(L)-\mu=b_n(V_s) - rk (T-I), \]
\[ b_{2n}(IV)=0. \]
 \item[(b)] if $n=1$:
  \[ b_0(IV)=b_0(V_s)=1, \]
  \[ b_1(IV)=b_1(V_s)+b_1(L)-\mu+r-2=b_1(V_s)-\rk(T-1)+2(r-1),\]
  \[b_2(IV)=0,\]
  where $r$ denotes the number of singular points of the curve $V$.
  \end{itemize}
\end{thm}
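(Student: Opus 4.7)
The plan is to read off each Betti number directly from the structural description of $HI^{\bar{m}}_\ast(V)$ recalled in Section \ref{background}: with $M$ the complement in $V$ of small open cone neighborhoods of the singular points and $L=\partial M$ the disjoint union of the links, the middle perversity has cutoff $k=n$ and gives
\[ HI_i(V) = H_i(M,L) \text{ for } i<n, \qquad HI_i(V)=H_i(M) \text{ for } i>n, \]
together with the T-diagram $HI_n(V) \cong H_n(M,L) \oplus \im(H_nL \to H_nM)$ in the middle. Beyond these, the only ingredients used are the pair sequence of $(V,\operatorname{Sing}(V))$, the specialization sequence (\ref{equ.specialsequ}), the Wang sequences (\ref{equ.wang}) and (\ref{w2}), and Alexander duality for $L\subset S_\epsilon$. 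The whole proof is then a direct rank computation, essentially a repackaging of what is done in Theorem \ref{thm1} and Remark \ref{gen}.

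Case $n\geq 2$: Theorem \ref{thm1} (extended to the multi-singular case by Remark \ref{gen}) gives $b_i(IV)=b_i(V_s)$ for $i\notin\{1,n,2n-1,2n\}$, and $b_{2n}(IV)=b_{2n}(M)=0$ since $M$ is a compact $2n$-manifold with nonempty boundary. In degree $1$, excision and the long exact sequence of $(V,\operatorname{Sing}(V))$ --- using $\tilde H_0(\operatorname{Sing}(V))=\rat^{r-1}$ with $r=b_0(L)$ (each link $L_x$ being connected in this range) --- identify $HI_1(V) \cong H_1(V)\oplus \rat^{r-1}$; combined with $H_1(V)\cong H_1(V_s)$ from Theorem \ref{thm1} this gives $b_1(IV)=b_1(V_s)+b_0(L)-1$, and $b_{2n-1}(IV)=b_1(IV)$ follows by generalized Poincar\'e duality of intersection spaces. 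In the middle degree the T-diagram and $H_n(M,L)\cong H_n(V)$ (from the pair sequence, using $H_{n-1}(\operatorname{Sing}(V))=0$) yield $b_n(IV) = b_n(V)+\rk(H_nL\to H_nM)$; the same comparison of connecting maps $\partial_\ast\colon H_{n+1}(M,L)\to H_nL$ and $\partial_\ast\colon H_{n+1}(V_s,F)\to H_nF$ used in the proof of Theorem \ref{thm1} (coupled with the injectivity of $H_nL\hookrightarrow H_nF$ from $H_{n+1}(F,L)=0$, a consequence of Poincar\'e duality on the Milnor fiber) and the specialization sequence reduce this to $b_n(IV)=b_n(V_s)+b_n(L)-\mu$; plugging in the Wang identity $b_n(L)=\mu-\rk(T-I)$ --- obtained from (\ref{equ.wang}) after substituting the Alexander dualities $H_{n+1}(S_\epsilon-L)\cong H^{n-1}(L)$ and $H_n(S_\epsilon-L)\cong H^n(L)$ --- delivers the second equality.

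For $n=1$ the argument is the same except for the adjustments already handled in the $n=1$ part of the proof of Theorem \ref{thm1}: $b_0(IV)=1$ by connectedness; $b_2(IV)=0$ because $M$ has the homotopy type of a one-dimensional CW complex; and for $b_1(IV)$ the T-diagram together with $H_1(M,L)\cong H_1(V)\oplus\rat^{r-1}$ (same pair sequence as above) combines with the specialization sequence, the degree-$2$ correction $\rk(H_1F\to H_1V_s)=\mu-\rk H_2(V_s,F)+1$ --- the $+1$ coming from the fact that the Milnor fiber is a wedge of circles, so that $H_2(V_s)\to H_2(V_s,F)$ is nonzero of rank $1$ --- and the Wang identity $b_1(L)-\mu=r-\rk(T-I)$ read off from (\ref{w2}) plus Alexander duality, producing the two equivalent forms of $b_1(IV)$. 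This $+1$ correction is the main wrinkle: it is the sole place where the $n=1$ bookkeeping departs from the higher-dimensional case and is precisely what turns the naive $r-1$ shift into the $r-2$ appearing in the stated formula; keeping these $r$-dependent contributions cleanly separated across the two cases is the principal technical obstacle.
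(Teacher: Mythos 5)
Your proposal is correct and takes essentially the same route as the paper: the paper presents Theorem \ref{cor1} as a direct consequence of the discussion in Remark \ref{gen}, which itself runs the same rank bookkeeping — cutoff-degree formulas and T-diagram for $HI_\ast$, excision to $H_\ast(V,\operatorname{Sing}(V))$, the specialization and Wang sequences, and Alexander duality — that you spell out. Your unwinding of the middle-degree identity $b_n(IV)=b_n(V_s)+b_n(L)-\mu$ and the $n=1$ adjustments (the $+1$ from $\rk(H_2(V_s)\to H_2(V_s,F))=1$ and the multi-singular Wang identity $b_1(L)-\mu=r-\rk(T-1)$) match what the paper's proof of Theorem \ref{thm1} and Remark \ref{gen} do.
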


As a consequence of Theorem \ref{cor1}, we can now reprove Corollary 2.14 of \cite{banagl-intersectionspaces}
in the special case of a projective hypersurface, using results of \cite{CMS08}.
\begin{cor}\label{prop1} Let $V \subset \mathbb{P}^{n+1}$ be a complex projective  hypersurface with only isolated singularities. The difference between the Euler characteristics of the $\mathbb{Z}$-graded rational vector spaces $\redh I_*(V)$ and $IH_*(V)$ is computed by the formula:
\begin{equation}\label{comp}
\chi(\redh I_*(V)) - \chi(IH_*(V))=-2\chi_{<n}(L),
\end{equation}
where the total link $L$ is the disjoint union of the links of all isolated singularities of $V$, and $\chi_{<n}(L)$ is the truncated Euler characteristic of $L$ defined as $\chi_{<n}(L):=\sum_{i<n} (-1)^i b_i(L)$.
\end{cor}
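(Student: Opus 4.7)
The plan is to compute $\chi(\redh I_\ast(V))$ and $\chi(IH_\ast(V))$ separately as perturbations of $\chi(V_s)$, then subtract and simplify. Both the result of Theorem \ref{cor1} and the formula from \cite{CMS08} already express things in terms of invariants of the smoothing $V_s$ and of the link, so the proof should reduce to assembling these two inputs and performing careful bookkeeping.

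For $\chi(\redh I_\ast(V))$ I would take the alternating sum of the Betti numbers of $IV$ listed in Theorem \ref{cor1}. In the case $n\geq 2$, the corrections to $\chi(V_s)$ occur only in degrees $1$, $n$, $2n-1$, and $2n$, and a short computation (using $b_0(IV)=1$ to pass from $\chi(HI_\ast(V))$ to its reduced version) yields
\[ \chi(\redh I_\ast(V)) = \chi(V_s) - 2 b_0(L) + (-1)^n\bigl(b_n(L) - \mu\bigr). \]
The case $n=1$ is analogous, working from Theorem \ref{cor1}(b).

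For $\chi(IH_\ast(V))$ I would combine the formula from \cite{CMS08} recalled in the introduction with the standard cone truncation for middle-perversity intersection homology of $\overset{\circ}{\cone} L_x$; since $L_x$ has real dimension $2n-1$, this gives $IH_i^{\bar m}(\overset{\circ}{\cone} L_x) = H_i(L_x)$ for $i<n$ and $0$ otherwise, hence $\chi(IH_\ast(\overset{\circ}{\cone} L_x)) = \chi_{<n}(L_x)$. Summing local contributions over the singular points yields
\[ \chi(IH_\ast(V)) = \chi(V) - r + \chi_{<n}(L), \]
where $r$ is the number of singular points of $V$. To pass from $\chi(V)$ to $\chi(V_s)$, I would take the alternating sum of the specialization exact sequence (\ref{equ.specialsequ}); since $\rk H_n(F)=\mu$, this gives $\chi(V) = \chi(V_s) + (-1)^{n+1}\mu$.

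Subtracting the two resulting expressions, the $\mu$-terms cancel because $(-1)^n + (-1)^{n+1}=0$, leaving
\[ \chi(\redh I_\ast(V)) - \chi(IH_\ast(V)) = -2 b_0(L) + (-1)^n b_n(L) + r - \chi_{<n}(L). \]
To identify this with $-2\chi_{<n}(L)$ I would invoke the topology of the link: for $n\geq 2$ each $L_x$ is $(n-2)$-connected, so $b_0(L)=r$ and $\chi_{<n}(L)=r+(-1)^{n-1}b_{n-1}(L)$, together with Poincar\'e duality on the closed oriented $(2n-1)$-manifold $L_x$, which gives $b_{n-1}(L_x)=b_n(L_x)$; a routine sign chase then completes the identification. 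The $n=1$ case is a direct substitution, using that each link is a disjoint union of circles and so $b_0(L)=b_1(L)$. The main obstacle is simply the bookkeeping: three independent sources of correction (the middle-dimensional truncation, the extra contributions in degrees $1$ and $2n-1$ produced by multiple link components, and the Milnor-fiber term $\mu$) must be tracked carefully and shown to cancel in exactly the right way to produce the purely link-theoretic invariant $-2\chi_{<n}(L)$.
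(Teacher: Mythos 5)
Your proof is correct and follows essentially the same route as the paper's: you assemble the CMS08 formula, the cone formula for intersection homology, the specialization Euler characteristic identity $\chi(V) = \chi(V_s) + (-1)^{n+1}\mu$, and the Betti numbers of $IV$ from Theorem \ref{cor1}, then cancel the Milnor-number contributions and invoke the $(n-2)$-connectedness of $L_x$ together with Poincar\'e duality $b_{n-1}(L_x) = b_n(L_x)$. One minor remark: the cone formula you quote ($IH_i(\oc L_x) = H_i(L_x)$ for $i<n$, zero otherwise) is the compact-support version, while the paper explicitly uses intersection homology with closed supports, where the nonvanishing groups sit in degrees $>n$; since $L_x$ is a closed odd-dimensional manifold so $\chi(L_x)=0$, both conventions give $\chi(IH_*(\oc L_x)) = \chi_{<n}(L_x)$, and your computation is unaffected.
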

\begin{proof} For each $x \in {\rm Sing}(V)$, denote by $F_x$, $L_x$ and $\mu_x$ the corresponding Milnor fiber, link and Milnor number, respectively. Note that each link $L_x$ is connected if $n\geq 2$, and  Poincar\'e duality yields: $b_{n-1}(L_x)=b_n(L_x)$.

Let $V_s$ be a nearby smoothing of $V$. Then it is well-known that we have (e.g., see \cite{Dim04}[Ex.6.2.6]):
\begin{equation}\label{one}
\chi(H_*(V_s))-\chi(H_*(V))=\sum_{x \in {\rm Sing}(V)} (-1)^n \mu_x.
\end{equation}
On the other hand,  we get by \cite{CMS08}[Cor.3.5] that:
\begin{equation}\label{two}
\chi(H_*(V))-\chi(IH_*(V))=\sum_{x \in {\rm Sing}(V)} \left(1-\chi(IH_*(c^{\circ}L_x)) \right),
\end{equation}
where $c^{\circ}L_x$ denotes the open cone on the link $L_x$. By using the cone formula for intersection homology with closed supports (e.g., see \cite{Ba07}[Ex.4.1.15]) we note that: 
\begin{equation}\label{ihcone} \chi(IH_*(c^{\circ}L_x))=
\begin{cases}
1+(-1)^{n+1} b_n(L_x),  \ {\rm if} \ n\geq2,\\
b_1(L_x), \ \ \ \ \ \ \ \ \ \ \ \ \ \ \ \ \ {\rm if} \ n=1. 
\end{cases}
\end{equation}
Together with (\ref{two}), this yields:
\begin{equation}\label{three}
\chi(H_*(V))-\chi(IH_*(V))=
\begin{cases}
\sum_{x \in {\rm Sing}(V)} (-1)^n b_n(L_x),  \ {\rm if} \ n\geq2,\\
\sum_{x \in {\rm Sing}(V)}  \left(1-b_1(L_x) \right),  \ {\rm if} \ n=1.
\end{cases}
\end{equation}
Therefore, by combining (\ref{one}) and (\ref{three}), we obtain:
\begin{equation}\label{four}
\chi(H_*(V_s))-\chi(IH_*(V))=
\begin{cases}
\sum_{x \in {\rm Sing}(V)} (-1)^n \left( \mu_x + b_n(L_x) \right) ,  \ {\rm if} \ n\geq2,\\
\sum_{x \in {\rm Sing}(V)} \left( 1-\mu_x-b_1(L_x) \right),  \ \ \ \ {\rm if} \ n=1.
\end{cases}
\end{equation}
Lastly, Theorem \ref{cor1} implies that
\begin{equation}\label{five}
\chi(H_*(V_s))-\chi(\redh I_*(V))=2+2 \sum_{x \in {\rm Sing}(V)} (b_0(L_x)-1) +\sum_{x \in {\rm Sing}(V)} (-1)^n \left( \mu_x - b_n(L_x) \right),
\end{equation}
if $n \geq2$, and 
\begin{equation}\label{six}
\chi(H_*(V_s))-\chi(\redh I_*(V))=r+ \sum_{x \in {\rm Sing}(V)} \left(b_1(L_x)-\mu_x \right),
\end{equation}
if $n=1$, where $r$ denotes the number of singular points of the curve $V$. 
The desired formula follows now by combining the equations (\ref{four}) and (\ref{five}), resp. (\ref{six}), together with Poincar\'e duality for links.
\end{proof}

Let us illustrate our calculations on some simple examples (see also Section \ref{sec.cyquintic} for more elaborate examples involving conifold transitions between Calabi-Yau threefolds).
\begin{example}\label{conic} {\it Degeneration of conics.}\\ Let $V$ be the projective curve defined by 
\[ V:=\{ (x:y:z) \in \mathbb{P}^2 ~|~ yz=0 \}, \] 
that is, a union of two projective lines intersecting at $P=(1:0:0)$.
Topologically, $V$ is an equatorially pinched $2$-sphere, i.e., $S^2 \vee S^2$.
The (join) point $P$ is a nodal singularity, whose link is a union of two circles, the Milnor fiber is a cylinder $S^1 \times I$, and the corresponding monodromy operator is trivial. The associated intersection space $IV$ is 
given by attaching one endpoint of an interval to a northern hemisphere disc and the other endpoint to a 
southern hemisphere disc. Thus $IV$ is contractible and $\redh I_\ast (V)=0$.
It is easy to see (using the genus-degree formula, for example)
that a smoothing $$V_s:=\{ (x:y:z) \in \mathbb{P}^2 ~|~ yz+sx^2=0 \}$$ of $V$ is topologically a sphere $S^2$.
Thus $b_1(IV)=b_1(V_s)=0$. On the other hand, the normalization of $V$ is a disjoint union of two $2$-spheres, so $IH_*(V)=H_*(S^2) \oplus H_*(S^2)$. The formula of Corollary \ref{prop1} is easily seen to be satisfied.
\end{example}
\begin{example} \label{ex.kummer} {\it Kummer surfaces.} \\ 
Let $V$ be a Kummer quartic surface \cite{H05}, i.e., an irreducible algebraic surface of degree $4$ in $\mathbb{P}^3$ with $16$ ordinary double points (this is the maximal possible number of singularities on such a surface). 
The monodromy operator is \emph{not} trivial for this example.
It is a classical fact that a Kummer surface is the quotient of a $2$-dimensional complex torus (in fact, the Jacobian variety of a smooth hyperelliptic curve of genus $2$) by the  involution defined by inversion in the group law. In particular, $V$ is a rational homology manifold. Therefore, $$IH_*(V) \cong H_*(V).$$ 
And it is not hard to see (e.g., cf. \cite{Sp56}) that we have:
\[ H_*(V)=\left(  \rat, 0, \rat^{6}, 0, \rat \right). \]
Each singular point of $V$ has a link homeomorphic to $\mathbb{RP}^3$, and Milnor number equal to $1$ (i.e., the corresponding Milnor fiber is homotopy equivalent to $S^2$). A nearby smoothing $V_s$ of $V$ is a non-singular quartic surface in $\mathbb{P}^3$, hence a $K3$ surface. The Hodge numbers of any smooth $K3$ surface are: $b_{1,0}=0$, $b_{2,0}=1$, $b_{1,1}=20$, thus the Betti numbers of $V_s$ are computed as:
\[ b_0(V_s)=b_4(V_s)=1, \ b_1(V_s)=b_3(V_s)=0, \  b_2(V_s)=22. \]
Let $IV$ be the middle-perversity intersection space associated to the Kummer surface $V$. 
(As pointed out above, there is at present no general construction of the intersection space if the link is not simply
connected. However, 
to construct $IV$ for the Kummer surface, we can use the spatial homology truncation 
$t_{<2} (\real \mathbb{P}^3, Y)=\real \mathbb{P}^2,$
with $Y=C_2 (\real \mathbb{P}^3)=\intg$.)
Then Theorem \ref{cor1} yields that:
\[ b_0(IV)=1, \  b_1(IV)=b_3(IV)=15, \ b_2(IV)=6, \ b_4(IV)=0. \]
And the formula of Corollary \ref{prop1} reads in this case as: $-24-8=-2 \cdot 16$.
We observe that in this example, for the middle degree, all of $H_2,$ $IH_2$ and $HI_2$ agree, but are all different
from $H_2 (V_s)$.
\end{example}


\section{Maps from Intersection Spaces to Smooth Deformations}

The aim of this section is to show that the algebraic isomorphisms of Theorem \ref{thm1} are in most cases induced by continuous maps. 

\begin{prop}\label{map}
Suppose that $V$ is an $n$-dimensional projective hypersurface which has precisely one isolated singularity with link $L$.
If $n=1$ or $n\geq 3$ and $H_{n-1}(L;\intg)$ is torsionfree, then there is a map 
$\eta: IV \to V_s$ such that the diagram
\[ \xymatrix{
M \ar[r] \ar[rd] & IV \ar[r]^{\gamma} \ar[d]_{\eta} & V \\
& V_s \ar[ru]_{r_s} &
} \]
commutes.
\end{prop}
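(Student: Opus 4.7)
The plan is to build $\eta$ piecewise, using the pushout description
\[ IV = M \cup_g \cone(L_{<n}) \]
from Section \ref{background}, where $g$ is the composition $L_{<n} \xrightarrow{f} L \hookrightarrow M$ of the truncation map $f = \emb_n(L,Y)$ with the inclusion of the link. On the manifold piece I would take $\eta|_M$ to be the natural inclusion $M \hookrightarrow V_s$ coming from the decomposition $V_s = M \cup_L F$, where $F$ denotes the closed Milnor fiber. The substantive task is to extend this map compatibly across the cone, and the plan is to let that extension land inside $F \subset V_s$.

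This reduces the construction to producing a null-homotopy of the composition
\[ \phi : L_{<n} \xrightarrow{f} L = \partial F \hookrightarrow F, \]
since a null-homotopy of $\phi$ is equivalent to an extension $\bar\phi : \cone(L_{<n}) \to F$. This is the main step. Because $F$ is homotopy equivalent to a bouquet of $n$-spheres it is $(n-1)$-connected, while Theorem \ref{thm.generaltruncation} together with the standard construction of $t_{<n}$ presents $L_{<n}$ as a CW-complex of dimension at most $n$ satisfying $H_n(L_{<n}) = 0$ and $H_{n-1}(L_{<n}) = H_{n-1}(L;\intg)$. Standard obstruction theory for null-homotopies then places the obstructions in $\widetilde H^k(L_{<n};\pi_k(F))$ for $1 \leq k \leq n$; those with $k<n$ vanish because $\pi_k(F)=0$ in that range, so the only possibly nonzero obstruction is the primary class in $\widetilde H^n(L_{<n};\pi_n(F))$. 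Using $H_n(L_{<n}) = 0$, the Universal Coefficient Theorem identifies this group with $\Ext(H_{n-1}(L;\intg),\pi_n(F))$, which vanishes precisely when $H_{n-1}(L;\intg)$ is torsionfree. When $n=1$ the truncation $L_{<1}$ is a finite discrete set and path-connectedness of $F$ produces the null-homotopy automatically. In either case I obtain the desired $\bar\phi : \cone(L_{<n}) \to F$.

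Given $\bar\phi$, I define $\eta : IV \to V_s$ by setting $\eta|_M$ equal to the inclusion and $\eta|_{\cone(L_{<n})} = \bar\phi$; the two restrictions agree on $L_{<n}$ (both landing in $L = M \cap F$), so $\eta$ descends to the pushout. The upper triangle commutes by construction. For the lower triangle, I would argue that on $M$ both $r_s \circ \eta$ and $\gamma$ coincide with the canonical map $M \to V$ coming from the identification $V \cong M \cup_L \cone(L)$, while on $\cone(L_{<n})$ both compositions take values in the contractible cone neighborhood $\cone(L_0) \subset V$ of the singular point and agree on the base $L_{<n}$: indeed $r_s$ collapses $F$ into $\cone(L_0)$, and $\gamma|_{\cone(L_{<n})}$ is by definition $\cone(f)$. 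Contractibility of $\cone(L_0)$ then furnishes a homotopy rel $L_{<n}$ between the two maps, so the diagram commutes up to homotopy. The main obstacle in this plan is the obstruction-theoretic step, where one must isolate the single surviving obstruction as an Ext group and verify that it is killed exactly by the torsionfree hypothesis on $H_{n-1}(L;\intg)$.
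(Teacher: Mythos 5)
Your overall strategy is the same as the paper's: use the pushout description of $IV$, map the exterior $M$ by inclusion into $V_s=M\cup_L F$, and construct the extension over the cone by producing a null-homotopy of $L_{<n}\to L\to F$; the universal property of the pushout then gives $\eta$. However, you obtain the null-homotopy by a genuinely different mechanism. The paper first replaces $L$ by a minimal CW model $\widetilde L = \bigvee S^{n-1}_i \cup \bigcup e^n_j \cup e^{2n-1}$ (here the torsion-freeness of $H_{n-1}(L;\intg)$, together with the fact that $H_n(L;\intg)\cong H^{n-1}(L;\intg)$ is automatically free, is exactly what ensures $\widetilde L$ has this cell structure with no torsion cells), then chooses the truncation $L_{<n}=\bigvee S^{n-1}_i$; since this is an $(n-1)$-dimensional complex mapping into the $(n-1)$-connected space $F$, the composite is null-homotopic trivially, by cellular approximation. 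You instead keep the abstract truncation $t_{<n}(L,Y)$, which is $n$-dimensional, and run obstruction theory: the only possibly nonzero obstruction lies in $\widetilde H^n(L_{<n};\pi_n F)\cong\Ext(H_{n-1}(L;\intg),\pi_n F)$ by UCT and $H_n(L_{<n})=0$, and this group dies when $H_{n-1}(L;\intg)$ is free. Both arguments are correct, and both use the hypothesis in the same essential place; the paper's is more constructive, yours is closer to the ``official'' truncation that actually defines $IV$. (One small caveat: $\Ext=0$ is \emph{sufficient}, not ``precisely when'', but you only need the one direction.)

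Where your write-up is thinner than the paper's is the factorization itself. You silently identify $M_0$ (the exterior in $V$, from which $IV$ is actually built) with $M$ (the exterior in $V_s$); the paper makes this precise by constructing the displaced model $W$ and an explicit homeomorphism $IV\cong IW$, and then builds the map on $IW$. Your verification that $r_s\circ\eta\simeq\gamma$ is also much softer: the paper fixes a specific representative $\gamma$ adapted to the collar decomposition $M_0=\overline M_0\cup[-1,0]\times L_0$ and chases a large commutative diagram to get \emph{strict} commutativity, while you argue only that both composites agree on $M$ and take the cone into the contractible cone neighborhood of the singular point, giving commutativity up to homotopy rel $L_{<n}$. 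Since everything here is canonical only up to homotopy, this is adequate for the intended applications (Theorem \ref{isomap} and Corollary \ref{cor.hodgestruct}), but you should be explicit that you are proving homotopy-commutativity and that the $M$/$M_0$ identification is mediated by the diffeomorphism $\psi$ from Section \ref{sec.backgroundsing}; as written, the last paragraph reads as if $M$ were literally a subspace of both $IV$ and $V_s$, which it is not.
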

\begin{proof}
Assume $n\geq 3$ and $H_{n-1}(L;\intg)$ torsionfree.
Even without this assumption, the cohomology group
\[ H^{n-1}(L;\intg) = \Hom (H_{n-1}L,\intg)\oplus \Ext (H_{n-2}L,\intg) = 
  \Hom(H_{n-1}L,\intg) \]
is torsionfree. Hence $H_n (L;\intg)\cong H^{n-1}(L;\intg)$ is torsionfree.
Let $\{ z_1, \ldots, z_s \}$ be a basis of $H_n (L;\intg)$.
The link $L$ is $(n-2)$-connected; in particular simply connected, as $n\geq 3$.
Thus minimal cell structure theory applies and yields a cellular homotopy equivalence
$h: \widetilde{L}\to L$, where $\widetilde{L}$ is a CW-complex of the form
\[ \widetilde{L} = \bigvee_{i=1}^r S^{n-1}_i \cup \bigcup_{j=1}^s e^n_j \cup e^{2n-1}.
  \]
The $(n-1)$-spheres $S^{n-1}_i,$ $i=1,\ldots, r,$ generate $H_{n-1}(\widetilde{L};\intg)\cong 
H_{n-1} (L;\intg)=\intg^r$. The $n$-cells
$e^n_j,$ $j=1,\ldots, s$ are cycles, one for each basis element $z_j$. On homology,
$h_\ast$ maps the class of the cycle $e^n_j$ to $z_j$.
Then
\[ L_{<n} := \bigvee_{i=1}^r S^{n-1}_i, \]
together with the map
\[ f: L_{<n} \hookrightarrow \widetilde{L} \stackrel{h}{\longrightarrow} L, \]
is a homological $n$-truncation of the link $L$. We claim that the composition
\[ L_{<n} \stackrel{f}{\longrightarrow} L \hookrightarrow F \]
is nullhomotopic. Let
$b: F\to \bigvee^\mu S^n,$ $b': \bigvee^\mu S^n \to F$ be homotopy inverse homotopy equivalences.
The composition
\[ L_{<n} = \bigvee S^{n-1}_i \stackrel{f}{\longrightarrow} L \to F \stackrel{b}{\longrightarrow}
  \bigvee^\mu S^n \]
is nullhomotopic by the cellular approximation theorem. Thus
\[ L_{<n} \stackrel{f}{\longrightarrow} L \to F \stackrel{b'b}{\longrightarrow} F \]
is nullhomotopic. Since $b'b \simeq \id_F,$ $L_{<n} \to L \to F$ is nullhomotopic, establishing the claim.
Consequently, there exists an extension $\bar{f}:\cone (L_{<n}) \to F$ of 
$L_{<n} \to L \to F$ to the cone. We obtain a commutative diagram
\begin{equation} \label{equ.pushout}
\xymatrix{
\cone (L_{<n}) \ar[d]_{\bar{f}} & L_{<n} \ar[d]_f \ar@{^{(}->}[l] \ar[r]^f & L \ar@{^{(}->}[r] & M \ar@{=}[d] \\
F & L \ar@{^{(}->}[l] \ar@{^{(}->}[rr] & & M.
} \end{equation}
The pushout of the top row is the intersection space $IW$, the pushout of the bottom row
is $V_s$ by construction. Thus, by the universal property of the pushout, the diagram (\ref{equ.pushout})
induces a unique map
\[ IW \longrightarrow V_s \]
such that
\[ \xymatrix{
\cone (L_{<n}) \ar[r] \ar[rd] & IW \ar[d] & M \ar[l] \ar[dl] \\
& V_s &
} \]
commutes. \\

In the curve case $n=1$, the homology $1$-truncation is given by
$L_{<n} = \{ p_1, \ldots, p_l \} \subset L$, where $l = \rk H_0 (L)$ and $p_i$ lies in the $i$-th
connected component of $L$. The map $f: L_{<n}\to L$ is the inclusion of these points.
Let $p\in F$ be a base point. Since $F$ is path connected, we can 
choose paths $I \to F$ connecting each $p_i$ to $p$. These paths define a map
$\overline{f}: \cone (L_{<n})\to F$ such that
\[ \xymatrix{
\cone (L_{<n}) \ar[d]_{\overline{f}} & 
  L_{<n} \ar@{^{(}->}[d]^f \ar@{^{(}->}[l] \ar@{^{(}->}[r]^f & L \ar@{^{(}->}[r] & M \ar@{=}[d] \\
F & L \ar@{^{(}->}[l] \ar@{^{(}->}[rr] & & M.
} \]
commutes. This diagram induces a unique map $IW \to V_s$ as in the case $n\geq 3$. \\

To the end of this proof, we will be using freely the notations introduced in Section \ref{sec.backgroundsing}.
As in the construction of the specialization map $r_s$, we use a collar to write $M_0$ as
$M_0 = \overline{M}_0 \cup [-1,0]\times L_0$ with $\overline{M}_0$ a compact codimension $0$
submanifold of $M_0$, diffeomorphic to $M_0$. The boundary of $\overline{M}_0$ corresponds to
$\{ -1 \} \times L_0$. The diffeomorphism $\psi: M\to M_0$ induces a decomposition
$M = \overline{M} \cup [-1,0] \times L.$ Recall that our model for the cone on a space $A$ is
$\cone (A) = [0,1]\times A / \{ 1 \}\times A.$ To construct $IV$, we may take
$(L_0)_{<n} = L_{<n}$ and we define $f_0: (L_0)_{<n} \to L_0$ to be
\[ (L_0)_{<n} \stackrel{f}{\longrightarrow} L \stackrel{\psi|}{\longrightarrow} L_0. \]
The map
\[ \gamma: IV = \overline{M}_0 \cup [-1,0]\times L_0 \cup \cone (L_0)_{<n} \to
 \overline{M}_0 \cup [-1,0]\times L_0 \cup \cone (L_0) =V \]
maps $\overline{M}_0$ to $\overline{M}_0$ by the identity, 
$\cone (L_0)_{<n}$ to the cone point in $V$, and maps $[-1,0]\times L_0$ by stretching
$[-1,0] \cong [-1,1]$ using the function $h$ from the construction of the specialization map.
Note that then $\gamma|_{[-1,0] \times L_0}$ equals the composition
\[ [-1,0]\times L_0 \stackrel{\psi|^{-1}}{\longrightarrow} [-1,0]\times L
 \stackrel{\rho|}{\longrightarrow} [-1,0]\times L_0 \cup \cone (L_0). \]
Let us introduce the short-hand notation
\[ C = \rho^{-1}_\delta ([-1,0]\times L_0),~
  C_{-1} = \rho^{-1}_\delta (\{ -1 \} \times L_0),~
  C_0 = \rho^{-1}_\delta (\{ 0 \} \times L_0) = B_\epsilon (x_0) \cap N_\delta, \]
\[ \overline{N}_\delta = \rho^{-1}_\delta (\overline{M}_0),~
  D = [-1,0]\times L_0 \cup \cone (L_0). \]
We claim that $\{ -1 \} \times L \subset C_{-1}$. The claim is equivalent
to $\rho_\delta (\{ -1 \} \times L)\subset \{ -1 \}\times L_0$, which follows from
$\rho_\delta (\{ -1 \} \times L) = \psi (\{ -1 \} \times L)$ and the fact that the collar
on $M$ has been constructed by composing the collar on $M_0$ with the inverse of $\psi$.
Similarly,
\[ \{ 0 \} \times L \subset C_{0},~ [-1,0] \times L \subset C. \]
The commutative diagram
\[ \xymatrix{
\overline{M}_0 \ar[d]^{\psi|^{-1}} & \{ -1 \} \times L_0 \ar[d]^{\psi|^{-1}} \ar@{^{(}->}[l] \ar@{^{(}->}[r] 
 & [-1,0] \times L_0 \ar[d]^{\psi|^{-1}} & 
  \{ 0 \}\times (L_0)_{<n} \ar@{=}[d]\ar@{^{(}->}[l]_{f_0} \ar@{^{(}->}[r] &
  \cone (L_0)_{<n}  \ar@{=}[d] \\
\overline{M} & \{ -1 \} \times L \ar@{^{(}->}[l] \ar@{^{(}->}[r] & [-1,0] \times L & 
  \{ 0 \}\times L_{<n} \ar@{^{(}->}[l]_f \ar@{^{(}->}[r] &
  \cone L_{<n}  
} \]
induces uniquely a homeomorphism $IV \stackrel{\cong}{\longrightarrow} IW$, as
$IV$ is the colimit of the top row and $IW$ is the colimit of the bottom row.
The map $\eta: IV\to V_s$ is defined to be the composition
\[ IV \stackrel{\cong}{\longrightarrow} IW \longrightarrow V_s. \]
We analyze the composition
\[ IV \stackrel{\cong}{\longrightarrow} IW \longrightarrow V_s \hookrightarrow N
  \stackrel{\rho}{\longrightarrow} V \]
of $\eta$ with the specialization map
by considering the commutative diagram
\[ \xymatrix{
\overline{M}_0 \ar[r]^{\psi|^{-1}} & \overline{M} \ar@{=}[r] &
  \overline{M} \ar@{^{(}->}[r] & \overline{N}_\delta \ar[r]^{\rho_\delta|} & 
  \overline{M}_0 \\
\{ -1 \}\times L_0 \ar@{^{(}->}[u] \ar@{^{(}->}[d] \ar[r]^{\psi|^{-1}} & 
  \{ -1 \} \times L \ar@{^{(}->}[u] \ar@{^{(}->}[d] \ar@{=}[r] &
  \{ -1 \} \times L \ar@{^{(}->}[u] \ar@{^{(}->}[d] \ar@{^{(}->}[r] & 
  C_{-1} \ar@{^{(}->}[u] \ar@{^{(}->}[d] \ar[r]^{\rho_\delta|} & 
  \{ -1 \} \times L_0 \ar@{^{(}->}[u] \ar@{^{(}->}[dd] \\
[-1,0] \times L_0 \ar[r]^{\psi|^{-1}} & [-1,0] \times L \ar@{=}[r] &
  [-1,0] \times L \ar@{^{(}->}[r] & C \ar[rd]^{\rho|} & \\
\{ 0 \}\times (L_0)_{<n} \ar[u]^{f_0} \ar@{^{(}->}[d] \ar@{=}[r] & 
  \{ 0 \} \times L_{<n} \ar[u]^f \ar@{^{(}->}[d] \ar[r]^f &
  \{ 0 \} \times L \ar@{^{(}->}[u] \ar@{^{(}->}[d] \ar@{^{(}->}[r] & 
 C_{0} \ar@{^{(}->}[u] \ar@{^{(}->}[d] \ar[r]^{\operatorname{const}} &  D \\
\cone (L_0)_{<n} \ar@{=}[r] & \cone L_{<n} \ar[r]^{\overline{f}} &
  F \ar@{^{(}->}[r] & B\cup C_0 \ar[ru]_{\operatorname{const}} &   \\
} \]
The colimits of the columns are, from left to right, $IV,$ $IW,$ $V_s,$ $N$ and $V$.
Since $\overline{M} \hookrightarrow \overline{N}_\delta 
\stackrel{\rho_\delta}{\longrightarrow} \overline{M}_0$ is
$\psi|: \overline{M}\to \overline{M}_0$, etc., we see that the composition from the
leftmost column to the rightmost column is given by
\[ \xymatrix{
\overline{M}_0 \ar[r]^{\psi| \circ \psi|^{-1} =\id} & \overline{M}_0 \\
\{ -1 \} \times L_0 \ar@{^{(}->}[u] \ar@{^{(}->}[d] \ar[r]^{id} & \{ -1 \} \times L_0 \ar@{^{(}->}[u] \ar@{^{(}->}[dd] \\
[-1,0] \times L_0 \ar[rd]^{\gamma| = \rho| \circ \psi|^{-1}} & \\
\{ 0 \} \times (L_0)_{<n} \ar[u]^{f_0} \ar@{^{(}->}[d] \ar[r]^{\operatorname{const}} & D \\
\cone (L_0)_{<n}, \ar[ru]_{\operatorname{const}} &
} \]
which is $\gamma$.
\end{proof}
The torsion freeness assumption in the above proposition can be eliminated as long as the link is still
simply connected. Indeed, since in the present paper we are only interested in rational homology,
it would suffice to construct a rational model $IV_\rat$ of the intersection space $IV$. This can be done
using Bousfield-Kan localization and the odd-primary spatial homology truncation developed in
Section 1.7 of \cite{banagl-intersectionspaces}. For example, if the link of a surface singularity
is a rational homology 3-sphere $\Sigma$, then the rational spatial homology $2$-truncation
$t^\rat_{<2} \Sigma$ is a point. The reason why we exclude the surface case in the above 
proposition is that surface links are generally not simply connected, which general spatial
homology truncation requires. This does not preclude the possibility of constructing a map
$IV \to V_s$ for a given surface $V$ by using ad-hoc devices. For example, if one has an
ADE-singularity, then the link is $S^3 /G$ for a finite group $G$ and $\pi_1 (S^3 /G)=G$.
Thus the link is a rational homology $3$-sphere and the rational $2$-truncation is a point.\\

We can now prove the following result:
\begin{thm} \label{isomap}
Let $V \subset \mathbb{CP}^{n+1}$ be a complex projective hypersurface with precisely one isolated singularity and with link $L$.
If $n=1$ or $n\geq 3$ and $H_{n-1}(L;\intg)$ is torsionfree, then the algebraic isomorphisms of Theorem \ref{thm1} 
can be taken to be induced by the  map 
$\eta: IV \to V_s$ constructed in Proposition \ref{map}. In particular, the dual isomorphisms in cohomology are ring isomorphisms.
\end{thm}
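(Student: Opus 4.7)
The plan is to compare the long exact sequences of the pairs $(IV,M)$ and $(V_s,M)$, which assemble into a commutative ladder because, by the construction in Proposition \ref{map}, $\eta$ restricts to the identity on $M$ (here $M = V_s\setminus B$ sits as a common subspace inside both $IV$ and $V_s$). By excision the relative groups identify as $H_i(IV,M)\cong \tilde H_{i-1}(L_{<n})$ and $H_i(V_s,M)\cong H_i(F,L)$, and under these identifications the map induced by $\eta$ factors through $f_*: \tilde H_{i-1}(L_{<n})\to \tilde H_{i-1}(L)$ together with the connecting homomorphism of $(F,L)$.

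For $i\leq n-1$, the map $f_*$ is the truncation isomorphism, and since $F\simeq \bigvee^\mu S^n$ the long exact sequence of $(F,L)$ gives $H_i(F,L)\cong \tilde H_{i-1}(L)$ compatibly with $\eta_*$. Hence $\eta_*$ is an isomorphism on relative homology throughout this range, and the five-lemma immediately yields $\eta_*: H_i(IV)\stackrel{\cong}{\to} H_i(V_s)$ for $i\leq n-2$. For $n<i<2n$, both relative groups vanish: $\tilde H_{i-1}(L_{<n}) = 0$ by the truncation property, and $H_i(F,L)\cong H^{2n-i}(F)=0$ by Poincar\'e--Lefschetz duality, since $F$ has cohomology concentrated in degrees $0$ and $n$ while $0<2n-i<n$. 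Thus $H_i(IV)\cong H_i(M)\cong H_i(V_s)$, with $\eta_*$ realizing the identification through the common inclusion of $M$.

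The main obstacle is the two remaining degrees $i=n-1$ and the middle degree $i=n$, where $\eta_*$ on the relative groups is no longer an isomorphism. For $i=n-1$ I will observe that the two connecting maps $H_n(IV,M)\to H_{n-1}(M)$ and $H_n(V_s,M)\to H_{n-1}(M)$ have the same image $\im(H_{n-1}(L)\to H_{n-1}(M))$: on the intersection space side $H_n(IV,M)\cong H_{n-1}(L)$ via truncation, and on the smoothing side the connecting map $H_n(F,L)\to H_{n-1}(L)$ is surjective, while $\eta_*$ provides a splitting. Since $H_{n-1}(IV,M)\to H_{n-1}(V_s,M)$ is already an isomorphism by the previous step, the ladder reduces to a morphism of short exact sequences $0\to H_{n-1}(M)/\im \to H_{n-1}(-)\to \ker\to 0$ with isomorphisms on both ends, and the five-lemma concludes.

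For $i=n$ I will compare the Mayer--Vietoris sequences of the pushouts $IV = M\cup_{L_{<n}} \cone L_{<n}$ and $V_s = M\cup_L F$. Both sequences terminate in the group $\ker(H_{n-1}(L)\to H_{n-1}(M))$, on which $\eta_*$ is the identity, while the left-hand terms are respectively $H_n(M)$ and $(H_n(M)\oplus H_n(F))/\im(H_n(L))$. Comparison under $\eta_*$ is an isomorphism precisely when $H_n(L)\to H_n(F)$ is surjective, which by the Wang sequence of the Milnor fibration happens exactly when $T-1=0$. This recovers the iff statement of Theorem \ref{thm1} via the geometric map $\eta$. Finally, dualizing over $\rat$ transfers the homology isomorphisms to cohomology isomorphisms $\eta^*: H^i(V_s)\to HI^i(V)$ in the same degrees; naturality of the cup product makes $\eta^*$ a ring homomorphism, hence a ring isomorphism.
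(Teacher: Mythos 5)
Your proposal is correct in its overall architecture and closely parallels the paper's argument; both compare long exact sequences associated to the decompositions of $IV$ and $V_s$ along the common piece $M$ and finish with five-lemma-type reasoning, isolating the middle degree for a separate monodromy analysis. The main route difference is that the paper works throughout with the Mayer--Vietoris ladders attached to the pushout diagram (\ref{equ.pushout}), while you use the long exact sequences of pairs $(IV,M)$ and $(V_s,M)$ for $i\ne n$ and switch to Mayer--Vietoris only at $i=n$. This is a perfectly viable variant. A second difference: for $i=n$ the paper establishes injectivity of $\eta_*$ (via $H_{n+1}(V_s,M)=0$ for $n\geq 2$, and a fundamental-class argument for $n=1$) and then invokes the rank count of Theorem \ref{thm1}; you instead read the surjectivity criterion off the Mayer--Vietoris ladder directly. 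That re-derivation is fine, but it silently uses the same injectivity input: the vertical map $H_n(M)\to(H_n(M)\oplus H_n(F))/\mathrm{im}(H_n(L))$ is injective only because $\ker(H_n(L)\to H_n(F))\subseteq\ker(H_n(L)\to H_n(M))$, which for $n\geq 2$ follows from $H_{n+1}(F,L)=0$ (Poincar\'e--Lefschetz) and for $n=1$ from $\ker(b_*)=\rat[L]$ mapping to $0$ in $H_1(M)$; you should state this explicitly.

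There is one concrete gap in the range $n<i<2n$. You correctly note $H_i(IV,M)=0$ and $H_i(V_s,M)\cong H^{2n-i}(F)=0$ there, and conclude $H_i(IV)\cong H_i(M)\cong H_i(V_s)$. But the second isomorphism at $i=2n-1$ also requires controlling $H_{2n}(V_s,M)\cong H_{2n}(F,L)\cong H^0(F)\cong\rat$, which does \emph{not} vanish. One must additionally observe that the connecting map $H_{2n}(V_s,M)\to H_{2n-1}(M)$ is zero: under excision it is $H_{2n}(F,L)\xrightarrow{\partial}H_{2n-1}(L)\to H_{2n-1}(M)$, the fundamental class $[F,L]$ goes to $[L]=[\partial M]$, which dies in $H_{2n-1}(M)$. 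Without this remark the surjectivity of $H_{2n-1}(M)\to H_{2n-1}(V_s)$ is established but its injectivity is not. (The paper sidesteps this by using Mayer--Vietoris together with the observation that $H_n(L)\to H_n(F)$ injective forces the relevant connecting map to vanish.) Aside from these two points — the implicit injectivity at $i=n$ and the $i=2n-1$ connecting map — your argument is sound, including the reduction of the $i=n-1$ ladder to a short-exact-sequence comparison via the equality of images.
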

\begin{proof} 
The map $\eta: IV\to V_s$ is a composition
\[ IV \stackrel{\cong}{\longrightarrow} IW \stackrel{\eta'}{\longrightarrow} V_s. \]
As the first map is a homeomorphism, it suffices to show that $\eta'$ is a rational homology isomorphism.

We begin by considering the following diagram of long exact Mayer-Vietoris
sequences for the commutative diagram (\ref{equ.pushout}) of pushouts (e.g., see \cite{GM81}[19.5]):
\begin{equation} \label{dp}
\xymatrix{
\cdots \ar[r] & \redh_i(L_{<n}) \ar[d]_{f_*}\ar[r]  & \redh_i (\cone (L_{<n})) \oplus \redh_i(M) \ar[r] \ar[d]_{(\bar{f}_*,id)} & \redh_i(IW) \ar[r] \ar[d]_{\eta'_*} & \redh_{i-1}(L_{<n}) \ar[r] \ar[d]_{f_*} & \cdots \\
\cdots \ar[r] & \redh_i(L) \ar[r] & \redh_i(F) \oplus \redh_i(M) \ar[r] & \redh_i(V_s) \ar[r] & \redh_{i-1}(L) \ar[r] & \cdots 
} \end{equation}
Recall that, by construction, $f_*: \redh_i(L_{<n}) \to \redh_i(L)$ is an isomorphism if $i < n$, and $\redh_i(L_{<n}) \cong 0$ if $i \geq n$. Also, as stated in Section \ref{sec.backgroundsing}, $L$ is $(n-2)$-connected, and $F$ is $(n-1)$-connected. 

Let us first assume that $n \geq 3$. Then if $i <n$, a five-lemma argument on the diagram (\ref{dp}) yields that $\eta'_{\ast}:\redh_i(IW) \to \redh_i(V_s)$ is an isomorphism. If $i \geq n$, recall from the proof of Theorem \ref{thm1} that $j_*:H_n(L) \to H_n(F)$ is injective. In particular, the map $H_{n+1}(V_s) \to H_n(L)$ is the zero homomorphism. Then diagram (\ref{dp}) yields that $\eta'_{\ast}:\redh_i(IW) \to \redh_i(V_s)$ is an isomorphism for $n<i<2n$.
For $i=n$, there is a commutative diagram:

\begin{equation} \label{dp2}
\xymatrix{
0 \ar[r] & 0 \ar[d]\ar[r]  & 0 \oplus \redh_n(M) \ar[r] \ar[d]_{(\bar{f}_*,id)} & \redh_n(IW) \ar[r] \ar[d]_{\eta'_*} & \redh_{n-1}(L_{<n}) \ar[r] \ar[d]_{f_*}^{\cong} &\\
0 \ar[r] & \redh_n(L) \ar[r] & \redh_n(F) \oplus \redh_n(M) \ar[r] & \redh_n(V_s) \ar[r] & \redh_{n-1}(L) \ar[r] &
} \end{equation}
\begin{equation*}
\xymatrix{
\ar[r] & \redh_{n-1}(M) \ar[r] \ar[d]_{id}^{\cong} & \redh_{n-1}(IW) \ar[r] \ar[d]_{\eta'_*}^{\cong} & 0\\
\ar[r] & \redh_{n-1}(M) \ar[r] &  \redh_{n-1}(V_s) \ar[r]  & 0.
} \end{equation*}
By excision, Poincar\'e duality and the connectivity of $F$,
\[ H_{n+1}(V_s, M)\cong H_{n+1} (F,L) \cong H^{n-1} (F)=0. \]
Thus the exact sequence of the pair $(V_s, M),$
\[ 0=H_{n+1}(V_s,M) \longrightarrow H_n (M) \longrightarrow H_n (V_s), \]
shows that the map $\iota_{s\ast}: H_n (M)\to H_n (V_s)$ is injective.
Let $x\in H_n (IW)$ be an element such that $\eta'_\ast (x)=0$. Then, as $f_\ast$ is an isomorphism
in degree $n-1$, $x=\iota_\ast (m)$ for some $m\in H_n (M),$ where $\iota_\ast$ denotes the map
$\iota_\ast: H_n (M)\to H_n (IW)$. Thus $\iota_{s\ast} (m)=0$ and, by the injectivity of 
$\iota_{s\ast},$ $m=0$. It follows that $\eta'_\ast$ is a monomorphism (whether or not the monodromy is trivial).
Therefore, $\eta'_\ast$ is an isomorphism iff $\rk H_n (IW) = \rk H_n (V_s)$.
By the Stability Theorem \ref{thm1}, this is equivalent to $T$ being trivial. \\

If $n=1$, recall that $H_2(M) \cong 0$, $H_2(F) \cong 0$, $H_2(IV)=0$ and $H_2(V_s) \cong \rat$. So the relevant part of diagram (\ref{dp}) is:
\begin{equation} \label{dp3}
\xymatrix{
0 \ar[r] & 0 \ar[d]\ar[r] & 0 \ar[d]\ar[r]  & 0 \oplus \redh_1(M) \ar[r] \ar[d]_{(\bar{f}_*,id)} & \redh_1(IW) \ar[r] \ar[d]_{\eta'_*} & \redh_{0}(L_{<1}) \ar[r] \ar[d]_{f_*}^{\cong} &  0\\
0 \ar[r] & \rat \ar[r] & \redh_1(L) \ar[r] & \redh_1(F) \oplus \redh_1(M) \ar[r] & \redh_1(V_s) \ar[r] & \redh_{0}(L) \ar[r] &  0.
} \end{equation}
The group $H_2 (F,L)\cong \rat$ is generated by the fundamental class $[F,L]$.
The connecting homomorphism $H_2 (F,L)\to H_1 (L)$ maps $[F,L]$ to the fundamental
class $[L]$ of $L$. As $L=\partial M,$ the image of $[L]$ under 
$H_1 (L)\to H_1 (M)$ vanishes. This, together with the commutative square
\[ \xymatrix{
H_2 (V_s, M) \ar[r] & H_1 (M) \\
H_2 (F,L) \ar[u]_{\cong} \ar[r] & H_1 (L) \ar[u]
} \]
shows that $H_2 (V_s,M)\to H_1 (M)$ is the zero map. Consequently, 
$H_1 (M)\to H_1 (V_s)$ is a monomorphism, as in the case $n\geq 3$.
It follows as above that $\eta'_\ast$ is injective. The claim then follows from Theorem \ref{thm1}.
\end{proof}
The above proof shows that $\eta_\ast: HI_n (V)\to H_n (V_s)$ is always injective. The assumption on the monodromy
is needed for surjectivity. Hence we obtain the two-sided bound
\[   \max \{ \rk IH_n (V), \rk H_n (M), \rk H_n (M,\partial M) \}   \leq \rk HI_n (V) \leq \rk H_n (V_s) \]
in the middle degree. These inequalities are sharp: By the Stability Theorem \ref{thm1}, the upper bound is attained
for trivial monodromy $T$, and the lower bound is attained for the Kummer surface of Example \ref{ex.kummer}.
The bounds show that regardless of the monodromy assumption of the Stability Theorem, 
$HI_\ast (V)$ is generally a better approximation
of $H_* (V_s)$ than intersection homology or ordinary homology.

\begin{cor} \label{cor.hodgestruct}
Under the hypotheses of Theorem \ref{isomap}, let us assume moreover that the local monodromy operator associated to the singularity of $V$ is trivial. Then the rational cohomology groups $HI^i(V) $ of the intersection space can be endowed with rational mixed Hodge structures, so that the canonical map $\gamma:IV \to V$ induces homomorphisms of mixed Hodge structures in  cohomology.
\end{cor}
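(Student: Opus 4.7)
The plan is to transport the pure Hodge structure from the smooth deformation $V_s$ along the cohomology isomorphism produced by Theorem \ref{isomap}, and then to identify $\gamma^\ast$ with a composition of two morphisms of mixed Hodge structures via the factorization $\gamma = r_s \circ \eta$ from Proposition \ref{map}.

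First I would set up the structure on $HI^\ast(V;\rat)$. Since $V_s$ is smooth projective, Deligne equips $H^i(V_s;\rat)$ with a pure Hodge structure of weight $i$. Under the trivial monodromy hypothesis, Theorem \ref{isomap} provides rational isomorphisms $\eta^\ast \colon H^i(V_s;\rat) \xrightarrow{\cong} HI^i(V;\rat)$ for every $i<2n$, including the middle degree $i=n$. I define the (in fact pure) mixed Hodge structure on $HI^i(V;\rat)$ for $i<2n$ by transport of structure, i.e.\ by declaring $\eta^\ast$ to be an isomorphism of Hodge structures. For $i\geq 2n$, Theorem \ref{cor1} gives $HI^i(V;\rat)=0$, which we equip with the trivial MHS; note that the dual $\gamma^\ast \colon H^i(V;\rat) \to 0$ in that range is then automatically a MHS morphism.

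Second, I would invoke the classical fact that the topological specialization map $r_s \colon V_s \to V$ of an algebraic one-parameter degeneration induces a morphism of mixed Hodge structures
\[ r_s^\ast \colon H^\ast(V;\rat) \longrightarrow H^\ast(V_s;\rat). \]
This is the content of the Steenbrink--Navarro Aznar construction of the limit mixed Hodge structure on the nearby fiber, equivalently of M.\ Saito's theory of mixed Hodge modules applied to the nearby cycles complex $\psi_\pi \rat_X$: the specialization map factors as $H^\ast(V) \to \mathbf{H}^\ast(V;\psi_\pi \rat_X) \cong H^\ast(V_s)$, and both arrows are MHS morphisms (cf.\ Peters--Steenbrink).

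Third, I would assemble the argument. By Proposition \ref{map} the map $\gamma \colon IV \to V$ factors (up to homotopy) as $\gamma = r_s \circ \eta$, so on rational cohomology
\[ \gamma^\ast = \eta^\ast \circ r_s^\ast \colon H^\ast(V;\rat) \to H^\ast(V_s;\rat) \to HI^\ast(V;\rat). \]
The first arrow is a MHS morphism by the classical result recalled above, and the second is a MHS morphism by the definition of the Hodge structure on $HI^\ast(V;\rat)$; hence so is their composition $\gamma^\ast$. The only non-formal input is the compatibility of the specialization map $r_s^\ast$ with mixed Hodge structures, which is where one must appeal to limit MHS / mixed Hodge module theory; this is the step I would expect to require the most care in citing precisely, as $r_s$ is not itself algebraic.
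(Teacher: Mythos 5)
Your overall strategy — factor $\gamma^\ast = \eta^\ast \circ r_s^\ast$ via Proposition~\ref{map} and Theorem~\ref{isomap}, show each factor is a morphism of mixed Hodge structures, and transport the structure from $H^\ast(V_s;\rat)$ to $HI^\ast(V;\rat)$ along $\eta^\ast$ — is exactly the paper's route. But your first step contains a genuine error that your second step then silently contradicts: you declare the Hodge structure on $HI^i(V;\rat)$ to be the transport of \emph{Deligne's pure} Hodge structure on the smooth fiber $V_s$, and you even remark that the resulting structure on $HI^i(V)$ is pure. This is not what one must use, and if one did use it, your step 3 would break: the specialization map $r_s^\ast : H^\ast(V;\rat) \to H^\ast(V_s;\rat)$ is a morphism of mixed Hodge structures only when the target carries the \emph{limit} mixed Hodge structure coming from the one-parameter degeneration, not Deligne's pure Hodge structure on a generic fiber. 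These two structures on $H^\ast(V_s;\rat)$ are in general different — the limit MHS is genuinely mixed.

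Moreover, the hypothesis of the Corollary is triviality of the \emph{local} monodromy acting on the Milnor fiber cohomology, which does not force the \emph{global} monodromy $N$ on $H^\ast(V_s)$ to vanish, so one cannot argue that the limit MHS degenerates to the pure one in the situations covered by the statement. The paper's Example of the nodal degeneration $y^2 = x(x-a_1)\cdots(x-a_4)(x-s)$ makes this concrete: the singular fiber has a single node (trivial local monodromy, so Corollary~\ref{cor.hodgestruct} applies), yet $N \neq 0$ and the limit MHS on $H^1(C_s)$, hence the induced MHS on $HI^1(C_0)$, has nontrivial weights $0$, $1$, and $2$. So the correct statement is: put the \emph{limit} MHS on $H^\ast(V_s;\rat)$, note $r_s^\ast$ is then a MHS morphism by Schmid--Steenbrink, and transport this (usually non-pure) structure to $HI^\ast(V;\rat)$ via $\eta^\ast$. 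With that one correction your argument coincides with the paper's proof.
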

\begin{proof} By Proposition \ref{map},  there is a  map
$\eta: IV \to V_s$ so that $\gamma:IV \to V$ is the composition $IV \overset{\eta}{\to} V_s \overset{r_s}{\to} V$. Then
$\gamma^*:H^*(V) \to H^*(IV)$ can be factored as $\gamma^*=\eta^* \circ r^*_s$. Moreover, by classical Hodge theory,
$r^*_s:H^*(V) \to H^*(V_s)$ is a mixed Hodge structure homomorphism, where $H^*(V_s)$ carries the ``limit mixed Hodge structure" (cf. \cite{Sc,Ste}, but see also  \cite{PS}[Sect.11.2]). Finally, Theorem \ref{isomap} yields that $\eta^*:H^*(V_s) \to H^*(IV)$ is an isomorphism of rational vector spaces. Therefore, $H^*(IV)$ inherits a rational mixed Hodge structure via $\eta^*$, i.e., the limit mixed Hodge structure, and the claim follows.
\end{proof}

Before discussing examples, let us say a few words about the limit mixed Hodge structure on $H^*(V_s)$. Consider the restriction $\pi|: X^\ast \to S^\ast$ of the projection $\pi$ to the punctured disc $S^\ast$. The loop winding once counterclockwise around the origin gives a generator of $\pi_1(S^\ast, s)$, $s \in S^\ast$. Its action on the fiber $V_s:= \pi|^{-1}(s)$ is well-defined up to homotopy, and it defines on $H^k_{\infty}:=H^k(V_s)$ ($k \in \intg$) the monodromy automorphism $M$. The operator $M:H^k_{\infty} \to H^k_{\infty}$ is quasi-unipotent, with nilpotence index $k$, i.e., there is $m>0$ so that $(M^m-I)^{k+1}=0$. Let $N:=\log M_u$ be the logarithm of the unipotent part in the Jordan decomposition of $M$, so $N$ is a nilpotent operator (with $N^{k+1}=0$). The (monodromy) weight filtration $W^{\infty}$ of the limit mixed Hodge structure is the unique increasing filtration  on $H^k_{\infty}$ such that $N(W^{\infty}_j) \subset W^{\infty}_{j-2}$ and $N^j:Gr^{W^{\infty}}_{k+j} H^k_{\infty} \to Gr^{W^{\infty}}_{k-j} H^k_{\infty}$ is an isomorphism for all $j \geq 0$. The Hodge filtration $F_{\infty}$ of the limit mixed Hodge structure is constructed in \cite{Sc} as a limit (in a certain sense) of the Hodge filtrations on nearby smooth fibers, and in \cite{Ste} by using the relative logarithmic de Rham complex. It follows that for $V_s$ a smooth fiber of $\pi$, we have the equality: ${\rm dim} F^pH^k(V_s)={\rm dim} F_{\infty}^pH^k_{\infty}$. We finally note that the semisimple part $M_s$ of the monodromy is an automorphism of mixed Hodge structures on $H^k_{\infty}$. Also, $N:H^k_{\infty} \to H^k_{\infty}$ is a morphism of mixed Hodge structures of weight $-2$.

We now discuss the case of curve degenerations  satisfying the assumptions of Corollary \ref{cor.hodgestruct}, i.e., the singular fiber of the family has a nodal singularity. Let $\pi:X \to S$ be a degeneration of curves of genus $g$, i.e., $C_s:=
\pi^{-1}(s)$ is a smooth complex projective curve for $s \neq 0$, with first betti number $b_1(C_s)=2g$, and assume that the special fiber $C_0$ has only one singularity which is a node. For the limit mixed Hodge structure on $H^1_{\infty}:=H^1(C_s)$ (or, equivalently, the mixed Hodge structure on $HI^1(C_0)$), we have the monodromy weight filtration: $$H^1_{\infty}=W^{\infty}_2 \supset W^{\infty}_1 \supset W^{\infty}_0.$$ On $W^{\infty}_0$ there is a pure Hodge structure of weight $0$ and type $(0,0)$. Since $N$ is a morphism of weight $-2$ and $N:W^{\infty}_2/ W^{\infty}_1 \overset{\sim}{\to} W^{\infty}_0$ is an isomorphism,  it follows that $W^{\infty}_2/ W^{\infty}_1$ is a pure Hodge structure of weight $2$ and type $(1,1)$. Note that the monodromy $M$ is trivial (or, equivalently, $N=0$) on $H^1(C_s)$ if and only if $W^{\infty}_0=0$, and in this case the monodromy weight filtration is the trivial one, i.e., $H^1_{\infty}=W^{\infty}_1 \supset 0$.
\begin{example} Consider a family of smooth genus $2$ curves $C_s$ degenerating into a union of two smooth elliptic curves meeting transversally at one double point $P$. Write $C_0=E_1 \cup E_2$ for the singular fiber of the family. A Mayer-Vietoris argument shows that $H^1(C_0) \cong H^1(E_1) \oplus H^1(E_2)$, so $H^1(C_0)$ carries a pure Hodge structure of weight $1$. For the limit mixed Hodge structure on $H^1(C_s)$, the Clemens-Schmid exact sequence yields that $$W^{\infty}_1 \cong W_1 H^1(C_0) \cong H^1(C_0) \ \ {\rm and} \ \ W^{\infty}_0 \cong W_0 H^1(C_0) \cong 0.$$ Thus the monodromy representation $M$ is trivial, i.e., $N=0$.
\end{example}
\begin{example} Consider the following family of plane curves
$$y^2=x(x-a_1)(x-a_2)(x-a_3)(x-a_4)(x-s)$$
(or its projectivization in $\cplx P^2$), where the $a_i$'s are distinct non-zero complex numbers. For $s \neq 0$ small enough, the equation defines a Riemann surface (or complex projective curve) $C_s$ of genus $2$. The singular fiber $C_0$ is an elliptic curve with a node. Its normalization $\widetilde{C}_0$ is a smooth elliptic curve.  If $\{\delta_1, \delta_2\}$ and, resp., $\{\gamma_1, \gamma_2\}$ denote the two meridians and, resp., longitudes generating $H_1(C_s;\intg)$, the degeneration can be seen geometrically as contracting the meridian (vanishing cycle) $\delta_1$ to a point. Let us denote by $\{\delta^i, \gamma^i\}_{i=1,2}$ the basis of $H^1(C_s;\intg)$ dual to the above homology basis.
If $p:\widetilde{C}_0 \to C_0$ denotes the normalization map, it is easy to see that $p^*:H^1(C_0) \to H^1(\widetilde{C}_0)$ is onto, with kernel generated by $\{\gamma^1\}$. The cohomology group $H^1(C_0)$ carries a canonical mixed Hodge structure with weight filtration defined by: $$W_0={\rm Ker} (p^*)  \ \ {\rm and} \ \ W_1=H^1(C_0).$$ The limit mixed Hodge structure on $H^1(C_s)$, i.e., the mixed Hodge structure on $HI^1(C_0)$, has weights $0$, $1$ and $2$, with the monodromy weight filtration defined by: $$W^{\infty}_0=\{\gamma^1\}, \ W^{\infty}_1=W^{\infty}_0  \oplus \rat \{ \delta^2, \gamma^2\}, \ W^{\infty}_2=W^{\infty}_1 \oplus \rat \{ \delta^1 \},$$ or in more intrinsic terms: $$W^{\infty}_0={\rm Image} (N), \ W^{\infty}_1={\rm Ker} (N).$$
Note that  $W^{\infty}_1 \cong H^1(C_0)$, so the mixed Hodge structure on $HI^1(C_0)$ determines the mixed Hodge structure of $H^1(C_0)$.
\end{example}


\section{Deformation of Singularities and Intersection Homology}\label{deform}

In this section we investigate deformation properties of intersection homology groups.
As in Section \ref{sec.backgroundsing}, let $\pi: X\to S$ be a smooth deformation of the singular hypersurface
\[ V = \pi^{-1}(0)=V(f) = \{ x\in \mathbb{P}^{n+1} ~|~ f(x)=0 \} \] with only isolated singularities $p_1, \cdots, p_r$.
We consider the nearby and vanishing cycle complexes associated to $\pi$ as follows (e.g., see \cite{De} or \cite{Dim04}[Sect.4.2]). 
Let $\hbar$ be the
complex upper-half plane (i.e., the universal cover of the punctured disc $S^\ast$ via the map $z \mapsto \exp(2\pi i z)$). 
With $X^\ast = X-V,$ the projection $\pi$ restricts to $\pi|: X^\ast \to S^\ast.$ The \emph{canonical fiber} $V_\infty$ of
$\pi$ is defined by the cartesian diagram
\[ \xymatrix{
V_\infty \ar[r] \ar[d] & X^\ast \ar[d]^{\pi|} \\
\hbar \ar[r] & S^\ast.
} \]
Let $k:V_{\infty} \to X^\ast \hookrightarrow X$ be the composition of the induced map with the inclusion, and denote by $i: V=V_0 \hookrightarrow X$ the inclusion of the singular fiber. 
Then the \emph{nearby cycle complex} is the bounded constructible sheaf complex defined by
\begin{equation} \psi_{\pi}(\rat_X):=i^*Rk_*k^* \rat_X \in D^b_c(V).\end{equation}  
If $r_s:V_s \to V$ denotes the specialization map, then by using a resolution of singularities it can be shown that $\psi_{\pi}(\rat_X) \simeq R{r_s}_*\rat_{V_s}$ (e.g., see \cite{PS}[Sect.11.2.3]). 
The \emph{vanishing cycle complex} $\phi_{\pi}(\rat_X) \in D_c^b(V)$ is the
cone on the comparison morphism $\rat_{V}=i^*\rat_X \to
\psi_{\pi}(\rat_X)$ induced by adjunction, i.e., there exists a canonical morphism
$can:\psi_{\pi}(\rat_X) \to \phi_{\pi}(\rat_X)$ such that
\begin{equation}\label{sp}
i^*\rat_X \to \psi_{\pi}(\rat_X) \overset{can}{\to} \phi_{\pi}(\rat_X)
\overset{[1]}{\to}\end{equation} is a distinguished triangle in
$D^b_c(V)$. In fact, by replacing $\rat_X$ by any complex in $D^b_c(X)$, we obtain
in this way functors \[ \psi_{\pi}, \phi_{\pi}:D^b_c(X) \to D^b_c(V). \] 

It follows directly from the definition that for any $x \in V=V_0$, 
\begin{equation}\label{Mi}
H^j(F_x)=\Ha^j(\psi_{\pi} \rat_X)_x \ \ \ {\rm and} \ \ \ 
\redh^j(F_x)=\Ha^j(\phi_{\pi} \rat_X)_x,\end{equation}
where $F_x$ denotes the (closed) Milnor fiber  of $\pi$ at $x$. Since $X$ is smooth,  the identification in (\ref{Mi}) can be used to show that \[ \text{Supp}
(\phi_{\pi} \rat_X) \subseteq \text{Sing}(V). \] And in fact the two sets are identified, e.g., \cite{Dim04}[Cor.6.1.18]. Moreover, since $V$ has only isolated singularities, and the germ $(V,x)$ of such a singularity is identified as above with the germ of $\pi$ at $x$, $F_x$ is in fact the Milnor fiber of the isolated hypersurface singularity germ $(V,x)$.

By applying the hypercohomology functor to the distinguished triangle (\ref{sp}), we get by (\ref{Mi}) the  long exact sequence:
\[  \cdots \to H^j(V) \to H^j(V_s) \to \oplus_{i=1}^r \redh^j(F_{p_i})  \to H^{j+1}(V) \to \cdots  \] Since $p_i$ ($i=1,..,r$) are isolated singularities, this further yields that 
\[ H^j(V) \cong  H^j(V_s) \ \ \text{for} \ j \neq \{n, n+1\} , \]
together with the exact specialization sequence dual to (\ref{equ.specialsequ}):
\begin{multline}\label{spcoh}  0 \to H^n(V) \to H^n(V_s) \to \oplus_{i=1}^r \redh^n(F_{p_i})  \to H^{n+1}(V) \to 
H^{n+1}(V_s) \to 0. \end{multline}

Let us now consider the sheaf complex \[ \Fa:=\psi_{\pi} \rat_X[n] \in D^b_c(V).\]
Since $X$ is smooth and $(n+1)$-dimensional, it is known that $\Fa$ is a perverse self-dual complex on $V$, and we get by (\ref{Mi}) that 
\[  \Fa \vert_{V_{\rm reg}} \simeq \rat_{V_{\rm reg}}[n], \] 
where $V_{\rm reg}:=V \setminus \{p_1, \cdots, p_r \}$ denotes the smooth locus of the hypersurface $V$. Since $\rat_{V_{\rm reg}}[n]$ is a perverse sheaf on $V_{\rm reg}$, we note that $\Fa$ is a perverse (self-dual) extension of $\rat_{V_{\rm reg}}[n]$ to all of $V$. However, the simplest such perverse (self-dual) extension is the (middle-perversity) {\it intersection cohomology complex} 
\begin{equation}\label{ic} IC_V:= \tau_{\leq -1} (Rj_* \rat_{V_{\rm reg}}[n]), \end{equation} with $j: V_{\rm reg} \hookrightarrow V$ the inclusion of the regular part, and $\tau_{\leq}$ the natural truncation functor on $D^b_c(V)$. (Recall that we work under the assumption that the complex projective hypersurface $V$ has (at most) isolated singularities.) Since the hypercohomology of $\Fa$ calculates the rational cohomology $H^*(V_s)$ of a smooth deformation $V_s$ of $V$, and the hypercohomology of $IC_V$ calculates the intersection cohomology $IH^*(V)$ of $V$, it is therefore natural to try to understand the relationship between the sheaf complexes $\Fa$ and $IC_V$. 
 
We have the following:
\begin{prop}\label{thm2} There is a quasi-isomorphism of sheaf complexes $\Fa \simeq IC_V$ if, and only if, the hypersurface $V$ is non-singular. If this is the case, then: \[ H^*(V_s) \cong IH^*(V) \cong H^*(V). \]
\end{prop}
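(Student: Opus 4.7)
The plan is to compare the two sheaf complexes stalkwise. First I record that both $\Fa$ and $IC_V$ are perverse self-dual extensions of $\rat_{V_{\rm reg}}[n]$ (the former by hypothesis on $\pi$ and $X$, the latter by construction). So the question is whether they agree on neighborhoods of the singular points $p_1,\ldots,p_r$.

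For the ``only if'' direction, suppose $\Fa\simeq IC_V$ in $D^b_c(V)$ and assume for contradiction that some $p=p_i$ is an actual singular point of $V$. Using the stalk formula (\ref{Mi}), the Milnor fiber $F_p$ of the isolated hypersurface singularity $(V,p)$ is homotopy equivalent to a bouquet of $\mu_p$ copies of $S^n$ with $\mu_p\geq 1$, so
\[ \Ha^0(\Fa)_p = \Ha^0(\psi_\pi\rat_X[n])_p = H^n(F_p) \cong \rat^{\mu_p} \neq 0. \]
On the other hand, the Deligne truncation in (\ref{ic}) forces $\Ha^k(IC_V)_p=0$ for every $k\geq 0$, in particular $\Ha^0(IC_V)_p=0$. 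This contradicts $\Fa\simeq IC_V$, so no singular point can exist, i.e.\ $V$ is non-singular.

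For the ``if'' direction, suppose $V$ is non-singular. Then $V_{\rm reg}=V$, so $j$ in (\ref{ic}) is the identity and $IC_V=\rat_V[n]$ (no truncation is needed, since $Rj_*\rat_V[n]=\rat_V[n]$ is already concentrated in degree $-n$). Moreover, smoothness of $\pi$ at $V$ is an open condition on the proper map $\pi$, so $\pi$ is a proper smooth submersion in an open neighborhood of $V$ in $X$, and Ehresmann's theorem makes it a locally trivial $C^\infty$-fibration there. Since $\psi_\pi\rat_X$ depends only on the germ of $\pi$ along $V$, the base-change formula for the trivial family gives $\psi_\pi\rat_X \simeq \rat_V$, hence $\Fa=\psi_\pi\rat_X[n]\simeq \rat_V[n]\simeq IC_V$.

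Finally, when this quasi-isomorphism holds (equivalently, when $V$ is smooth), taking hypercohomology gives $H^j(V_s)\cong \mbh^{j-n}(V,\Fa)\cong \mbh^{j-n}(V,IC_V)\cong IH^j(V)$, and $IH^j(V)\cong H^j(V)$ for the smooth variety $V$. I expect the only slightly technical point to be the identification $\psi_\pi\rat_X\simeq\rat_V$ in the smooth case: one must invoke properness of $\pi$ plus openness of smoothness to extend the regularity from $V$ to an open neighborhood before applying Ehresmann. Everything else is a direct stalk computation against the defining truncation (\ref{ic}).
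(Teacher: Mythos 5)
Your proof is correct and follows essentially the same strategy as the paper's: compare stalks of the two complexes at a (putative) singular point, where $\Ha^0(\Fa)_p \cong H^n(F_p)$ while the Deligne truncation (\ref{ic}) forces $\Ha^0(IC_V)_p = 0$. The only differences are in the auxiliary facts invoked. For the ``only if'' direction you use Milnor's bouquet theorem plus the elementary observation that $\mu_p = \dim_\cplx \mathcal{O}_{n+1}/J_g \geq 1$ whenever $dg(0)=0$; the paper instead cites A'Campo's theorem (via Dimca) that the Lefschetz number of the monodromy of a genuine singularity vanishes, hence $\redh^\ast(F_p)\neq 0$. Both correctly deliver $H^n(F_p)\neq 0$, and your route is arguably the more elementary one. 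For the ``if'' direction you argue directly that $\pi$ is a proper submersion in a neighborhood of $V$, hence Ehresmann gives $\psi_\pi\rat_X \simeq \rat_V$; the paper instead observes $\text{Supp}(\phi_\pi\rat_X) = \text{Sing}(V) = \emptyset$ and reads $\psi_\pi\rat_X \simeq i^\ast\rat_X = \rat_V$ off the distinguished triangle (\ref{sp}). These two arguments are equivalent in content — the vanishing-cycle support statement is itself usually proved by the Ehresmann argument — so this is a stylistic rather than substantive difference. Both proofs conclude identically by taking hypercohomology.
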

\begin{proof} The ``if" part of the statement follows from the distinguished triangle (\ref{sp}), since for $V$ smooth we have $\text{Supp} (\phi_{\pi} \rat_X) = \text{Sing}(V)=\emptyset$ (cf. \cite{Dim04}[Cor.6.1.18]) and $IC_V=\rat_V[n]$.

Let us now assume that there is a quasi-isomorphism $\Fa \simeq IC_V$. 
Then for any $x \in V$ and $j \in \intg$, there is an isomorphism of rational vector spaces:
\begin{equation}\label{stalk} \Ha^j(\Fa)_x \cong \Ha^j(IC_V)_x. \end{equation}
Assume, moreover, that there is a point $x \in V$ which is an isolated singularity of $V$ (i.e., if $g:(\mbc^{n+1},0) \to (\mbc,0)$ is an analytic function germ representative for $(V,x)$ then $dg(0)=0$). Then if  $F_x$ denotes the corresponding Milnor fiber, the Lefschetz number $\Lambda(h)$ of the monodromy homeomorphism $h:F_x \to F_x$ must vanish (e.g., see \cite{Dim04}[Cor.6.1.16]). So the Milnor fiber $F_x$ must satisfy $\redh^{\ast}(F_x) \neq 0$ (otherwise, $\Lambda(h)=1$), or equivalently, $H^n(F_x) \neq 0$. On the other hand, the identities (\ref{Mi}) and  (\ref{stalk}) yield:
\[  H^n(F_x) \cong \Ha^0(\Fa)_x \cong \Ha^0(IC_V)_x =0, \] where the last vanishing follows from the definition (\ref{ic}) of the complex $IC_V$. We therefore get a contradiction.
\end{proof}

\begin{remark}If the hypersurface $V$ is singular (i.e., the points $p_i$ are indeed singularities), the precise relationship between the two complexes $\Fa$ and $IC_V$ is in general very intricate. However, some information can be derived if one considers these two complexes as elements in Saito's category $\mh(V)$ of mixed Hodge modules on $V$. More precisely, $IC_V$ is a direct summand of $Gr^W_{n}\Fa$, where $W$ is the {\it weight filtration} on $\Fa$ in $\mh(V)$ (compare \cite{Sa}[p.152-153], \cite{CMSS}[Sect.3.4]).\end{remark}

\begin{remark} As Proposition \ref{thm2} suggests, intersection homology is not a smoothing invariant. On the other hand, it is known that intersection homology is invariant under {\it small resolutions}, i.e., if $\ensuremath{\widetilde{V}} \to V$ is a small resolution of the complex algebraic variety $V$ (provided such a resolution exists), then we have isomorphisms
\[ IH^*(V) \cong IH^*(\ensuremath{\widetilde{V}}) \cong H^*(\ensuremath{\widetilde{V}}). \]
Therefore, as suggested by the conifold transition picture (see \cite{banagl-intersectionspaces}[Ch.3]), the trivial monodromy condition arising in Theorem \ref{thm1} can be thought as being mirror symmetric to the condition of existence of a small resolution. More generally, the result of Theorem \ref{isomap} on the injectivity of the map $\eta_\ast: HI_* (V)\to H_* (V_s)$ ``mirrors" the well-known fact that the intersection homology of a complex variety $V$ is a vector subspace of the ordinary homology of any resolution of $V$ (the latter being an easy application of the Bernstein-Beilinson-Deligne-Gabber decomposition theorem).
\end{remark}


\section{Higher-Dimensional Examples: Conifold Transitions}
\label{sec.cyquintic}

We shall illustrate our results on examples derived from the study of conifold transitions (e.g., see \cite{Ro06,banagl-intersectionspaces}.

\begin{example} Consider 
the quintic
\[ P_s (z)= z_0^5 + z_1^5 + z_2^5 + z_3^5 + z_4^5 - 
   5(1+s) z_0 z_1 z_2 z_3 z_4, \]
depending on a complex structure parameter $s$. The variety
\[ V_s = \{ z \in \mathbb{P}^4 ~|~ P_s (z)=0 \} \]
is Calabi-Yau. It is smooth for small $s \not= 0$ and
becomes singular for $s = 0$. 
(For $V_s$ to be singular, $1+s$ must be fifth root of
unity, so $V_s$ is smooth for $0 < |s| < 
| e^{2\pi i/5} -1|$.) 
We write $V = V_0$ for the singular variety.
Any smooth quintic hypersurface
in $\mathbb{P}^4$ (is Calabi-Yau and) has Hodge numbers $b_{1,1} =1$
and $b_{2,1} = 101.$ Thus for $s \not= 0,$
\[ b_2 (V_s) = b_{1,1} (V_s)=1,~ b_3 (V_s) = 2(1+b_{2,1})=204. \]
The singularities are those points
where the gradient of $P_0$ vanishes. If one of the five homogeneous
coordinates $z_0, \ldots, z_4$ vanishes, then the gradient equations
imply that all the others must vanish, too. This is not a point on
$\mathbb{P}^4$, and so all coordinates of a singularity must be nonzero. 
We may then normalize the first one to be $z_0 =1$. From the gradient
equation $z_0^4 = z_1 z_2 z_3 z_4$ it follows that $z_1$ is 
determined by the last three coordinates, $z_1 = (z_2 z_3 z_4)^{-1}$.
The gradient equations also imply that
\[ 1 = z_0^5 = z_0 z_1 z_2 z_3 z_4 = z_1^5 = z_2^5 = z_3^5 = z_4^5, \]
so that all coordinates of a singularity are fifth roots of unity.
Let $(\omega, \xi, \eta)$ be any triple of fifth roots of unity.
(There are $125$ distinct such triples.) The $125$ points
\[ (1 : (\omega \xi \eta)^{-1} : \omega : \xi : \eta ) \]
lie on $V_0$ and the gradient vanishes there. These are thus the
$125$ singularities of $V_0$. 
Each one of them is a node, whose
neighborhood therefore looks topologically like the cone on the $5$-manifold
$S^2 \times S^3$. By replacing each node with a $\mathbb{P}^1,$ one obtains
a small resolution $\ensuremath{\widetilde{V}}$ of $V$.
By \cite{schoenc}, $b_2 (\ensuremath{\widetilde{V}}) = b_{1,1} (\ensuremath{\widetilde{V}})=25$ for the small resolution $\ensuremath{\widetilde{V}}$.
The intersection homology of a singular space is isomorphic to the ordinary
homology of any small resolution of that space. Thus $IH_\ast (V) \cong
H_\ast (\ensuremath{\widetilde{V}})$. Using the information summarized so far, one calculates the following
ranks ($s \not= 0$):
\[ \begin{array}{|c|c|c|c|} \hline
i & \rk H_i (V_s) & \rk H_i (V) & \rk IH_i (V) \\ \hline \hline
2 & 1     & 1   & 25 \\ \hline
3 & 204 & 103 & 2 \\ \hline
4 & 1    & 25  & 25 \\ \hline
\end{array} \]
The table shows that neither ordinary homology nor intersection homology are
stable under the smoothing of $V$. The homology of the (middle-perversity) intersection space $IV$
of $V$ has been calculated in \cite{banagl-intersectionspaces} and turns out to be
\[ \rk HI_2 (V)=1,~ \rk HI_3 (V)=204,~ \rk HI_4 (V)=1. \]
This coincides with the above Betti numbers of the smooth deformation $V_s$,
as predicted by our Stability Theorem \ref{thm1} and Remark \ref{gen}. Moreover, formula (\ref{b1}) yields: 
\[ \rk HI_1 (V)=124=\rk HI_5 (V). \]
By using the fact that the small resolution $\ensuremath{\widetilde{V}}$ of $V$ is a Calabi-Yau $3$-fold, we also get that 
\[ \rk IH_1 (V)=0=\rk IH_5 (V). \]
The Euler characteristic identity of Corollary \ref{prop1} is now easily seen to be satisfied.
\end{example}

\begin{example} (cf. \cite{GMS,Ro06}) \\
Let $V \subset \mathbb{P}^4$ be the generic quintic threefold containing the plane $\pi:=\{ z_3=z_4=0 \}$.
The defining equation for $V$ is: \[ z_3 g(z_0,\cdots, z_4) +z_4 h(z_0,\cdots, z_4)=0, \] where $g$ and $h$ are generic homogeneous polynomials of degree $4$. The singular locus of $V$ consists of:
\[ {\rm Sing}(V)=\{ [z] \in  \mathbb{P}^4 ~|~ z_3=z_4=g(z)=h(z)=0 \}=\{ 16 \ {\rm nodes} \}. \] The $16$ nodes of $V$ can be simultaneously resolved by blowing-up $\mathbb{P}^4$ along the plane $\pi$. The proper transform $\ensuremath{\widetilde{V}}$ of $V$ under this blow-up is a small resolution of $V$ (indeed, the fiber of the resolution $\ensuremath{\widetilde{V}} \to V$ over each $p \in {\rm Sing}(V)$ is a $\mathbb{P}^1$), and a smooth Calabi-Yau threefold. In particular, $IH_\ast (V) \cong H_\ast (\ensuremath{\widetilde{V}})$. A smoothing of $V$ is given as in the above example by the generic quintic threefold in $\mathbb{P}^4$, which we denote by $V_s$ ($s \neq 0$). Note that the passage from $V_s$ to $\ensuremath{\widetilde{V}}$ (via $V$) is a non-trivial conifold transition, as $b_2(V_s)=1$ and $b_2(\ensuremath{\widetilde{V}})=2$, i.e., the two Calabi-Yau manifolds $V_s$ and $\ensuremath{\widetilde{V}}$ cannot be smooth fibers of the same analytic family. The information summarized thus far, together with \cite{Ro06,Ro10} yield the following calculation of  ranks ($s \not= 0$):
\[ \begin{array}{|c|c|c|c|} \hline
i & \rk H_i (V_s) & \rk H_i (V) & \rk IH_i (V) \\ \hline \hline
2 & 1     & 1   & 2 \\ \hline
3 & 204 & 189 & 174 \\ \hline
4 & 1    & 2  & 2 \\ \hline
\end{array} \]
Again, neither ordinary homology nor intersection homology are
stable under the smoothing of $V$.
Since $V$ has only nodal singularities, the local monodromy operators are trivial. Therefore, by our Stability Theorem \ref{thm1} and Remark \ref{gen} (see also \cite{banagl-intersectionspaces}[Sect.3.7]), we can compute:
\[ \rk HI_1 (V)=15=\rk HI_5 (V). \]
\[ \rk HI_2 (V)=1=\rk HI_4 (V). \]
\[ \rk HI_3 (V)=204. \]
\[ \rk HI_6 (V)=0. \]
By using the fact that the small resolution $\ensuremath{\widetilde{V}}$ of $V$ is a Calabi-Yau threefold, we also get: 
\[ \rk IH_1 (V)=0=\rk IH_5 (V). \]
Finally, the Euler characteristic identity of Corollary \ref{prop1} reads as: $-232+168=-2 \cdot 32$.
\end{example}


\bibliographystyle{amsalpha}

\providecommand{\bysame}{\leavevmode\hbox to3em{\hrulefill}\thinspace}
\providecommand{\MR}{\relax\ifhmode\unskip\space\fi MR }
\providecommand{\MRhref}[2]{%
  \href{http://www.ams.org/mathscinet-getitem?mr=#1}{#2}
}
\providecommand{\href}[2]{#2}

\end{document}